\newcommand{\mc}{\mathcal}
\DeclareMathOperator{\ind}{\chi}
\newcommand{\N}{\mathbb{N}}
\newcommand{\C}{\mathbb{C}} 
    \newcommand{\DD}{\mathcal{D}}
 \newcommand{\vertiii}[1]{{\left\vert\kern-0.25ex\left\vert\kern-0.25ex\left\vert #1 
    \right\vert\kern-0.25ex\right\vert\kern-0.25ex\right\vert}}
 \newtheorem{thm}{Theorem}[section]
 \newtheorem{lemma}[thm]{Lemma}
 \newtheorem{cor}[thm]{Corollary}
 \newtheorem{prop}[thm]{Proposition}
 \newtheorem{rem}[thm]{Remark}
 \newtheorem{conj}[thm]{Conjecture}
 \numberwithin{equation}{section}
\theoremstyle{definition}
\begin{document}

\title[Weighted weak-type bounds for the Bergman projection]{Weak-type bounds for the Bergman projection with Bekoll\'e-Bonami weights}

\author{Jiale Chen}
\thanks{J. C. is supported by National Natural Science Foundation of China (No. 12501170).}
\address{Jiale Chen, School of Mathematics and Statistics, Shaanxi Normal University, Xi'an 710119, China}
\email{jialechen@snnu.edu.cn}

\author{Zoe Nieraeth}
\thanks{Z. N. is supported by the Basque government through project GV IT1615-22}
\address{Zoe Nieraeth (she/her), University of the Basque country (UPV/EHU), Leioa, Spain}
\email{zoe.nieraeth@gmail.com}

\author{Cody B. Stockdale}
\address{Cody B. Stockdale, School of Mathematical Sciences and Statistics, Clemson University, Clemson, SC 29634, USA}
\email{cbstock@clemson.edu}

\author{Nathan A. Wagner}
\thanks{N. A. W. was supported by the National Science Foundation, DMS MSPRF Grant No. 2203272 and is currently supported by the National Science Foundation, DMS Grant No. 2549719}
\address{Nathan A. Wagner, Department of Mathematical Sciences, George Mason University, Fairfax, VA 22030, USA}
\email{nwagner8@gmu.edu}

\begin{abstract}
We establish weighted weak-type bounds for the Bergman projection with respect to Bekoll\'e-Bonami characteristics. We present two proofs of an improved quantitative weak-type $(1,1)$ estimate, as well as sharp weak-type $(p,p)$ bounds for $p>1$ and mixed weighted weak-type $(1,1)$ inequalities. Our results, which hold for a wide class of simple domains in $\mathbb{C}^n$, are new even in the classical settings of the upper half-plane and the unit disk.\looseness=-1
\end{abstract}

\keywords{Bergman projection, Bekoll\'e-Bonami weights, weak-type estimates}

\subjclass[2020]{32A25, 32A36, 32A50, 42B20}

\maketitle

%%%%%%%%%%%%%%%%%%%%%%%%%%%%%%%%%%%%%%%%%%%%%%%%%%%%%%%%%%%%%%%%%%

\section{Introduction}\label{IntroductionSection}
\allowdisplaybreaks[4]

The Bergman projection $P$ of a domain $\Omega \subseteq \mathbb{C}^n$ is the orthogonal projection from $L^2(\Omega)$ onto the Bergman space 
$$
    \mathcal{A}^2(\Omega) := L^2(\Omega)\cap \text{Hol}(\Omega),
$$ 
where $L^2(\Omega)$ is the space of square-integrable functions on $\Omega\subseteq \mathbb{C}^n$ with respect to the Lebesgue measure on $\mathbb{C}^n=\mathbb{R}^{2n}$ and $\text{Hol}(\Omega)$ is the space of holomorphic functions on $\Omega$. Although, as we will see, our bounds hold for a general class of simple domains $\Omega\subseteq \mathbb{C}^n$, we focus on the case when $\Omega$ is the upper half-plane $\mathbb{H} := \{z \in \mathbb{C} \colon \text{Im}(z)>0\}$ until Section \ref{SimpleDomainsSection} for concreteness and simplicity. In this situation, $P$ can be expressed as 
$$
    Pf(z) = \frac{1}{\pi}\int_{\mathbb{H}} \frac{f(w)}{(z-\overline{w})^2}\,dA(w), 
$$
where $A$ denotes the Lebesgue area measure. 

As $P$ is a projection, it is automatically bounded on $L^2(\mathbb{H})$. More generally, $P$ acts boundedly on $L^p(\mathbb{H})$ for all $p \in (1,\infty)$. While the strong-type bound of $P$ on $L^p(\mathbb{H})$ fails at the endpoint $p=1$, the weak-type bound from $L^1(\mathbb{H})$ to $L^{1,\infty}(\mathbb{H})$ instead holds -- we have for general $p \in [1,\infty)$ that
$$
    \|Pf\|_{L^{p,\infty}(\mathbb{H})}:= \sup_{\lambda>0}\lambda A(\{z \in \mathbb{H}\colon |Pf(z)|>\lambda\})^{\frac{1}{p}}\lesssim  \|f\|_{L^p(\mathbb{H})}
$$
for all $f \in L^p(\mathbb{H})$. In \cites{BB1978,B198182}, Bekoll\'e and Bonami generalized the $L^p$ theory of the Bergman projection to weighted spaces -- we have for any $p \in (1,\infty)$ and weight $\sigma$ that $P$ is bounded on $L^p(\mathbb{H},\sigma)$ if and only if $\sigma \in B_p$, i.e.,  
$$
    [\sigma]_{B_p}:= \sup_{I} \langle \sigma \rangle_{Q_I}\langle \sigma^{1-p'}\rangle_{Q_I}^{p-1} < \infty,
$$
where the supremum is taken over all intervals $I$ on the real axis, $Q_I:=I \times (0,\ell(I)]$ is the Carleson tent over $I$, $\langle \sigma \rangle_{Q_I} := \frac{1}{A(Q_I)} \int_{Q_I} |\sigma|\,dA$, and $\frac{1}{p}+\frac{1}{p'}=1$. The $B_p$ condition also characterizes the weighted weak-type bounds for $P$ as follows: if $p \in [1,\infty)$ and $\sigma$ is a weight, then $P$ is bounded from $L^p(\mathbb{H},\sigma)$ to $L^{p,\infty}(\mathbb{H},\sigma)$ if and only if $\sigma \in B_p$, where $\sigma \in B_1$ means \looseness=-1
$$
    [\sigma]_{B_1} := \sup_{I} \langle \sigma \rangle_{Q_I}\|\sigma^{-1}\|_{L^{\infty}(Q_I)} < \infty.
$$
The qualitative aspects of the weighted theory for $P$ have been extensively studied, and the quantitative strong-type bounds in terms of $[\sigma]_{B_p}$ are also well-understood. In particular, Pott and Reguera proved the following sharp bound in \cite{PR2013}: if $p \in (1,\infty)$ and $\sigma \in B_p$, then 
\begin{align}\label{PStrongType}
    \|P\|_{L^p(\mathbb{H},\sigma)\rightarrow L^{p}(\mathbb{H},\sigma)} \lesssim [\sigma]_{B_p}^{\max\big(1,\frac{p'}{p}\big)}.
\end{align}
We note that \eqref{PStrongType} also holds with $P$ replaced by the positive Bergman operator
$$
    P^+f(z) = \frac{1}{\pi}\int_{\mathbb{H}} \frac{|f(w)|}{|z-\overline{w}|^2}\,dA(w).
$$
See \cites{APR2019, RTW2017, HWW20201, HWW20202, SW2022, SW2023, WW2021} for more on the weighted $L^p$ theory of the Bergman projection. 

We establish quantitative weak-type bounds for $P$ and $P^+$ with respect to $[\sigma]_{B_p}$. Our first result concerns the $p=1$ endpoint case and addresses \cite{SW2023}*{Open Question 1.16}. Below, we set \looseness=-1
$$
    [\sigma]_{B_{\infty}} := \sup_{I} \frac{1}{\sigma(Q_I)}\int_{Q_I} M(\sigma \chi_{Q_I})(z)\,dA(z),
$$
where $M$ denotes the Carleson tent maximal operator given by 
$$
    Mf := \sup_{I} \langle |f|\rangle_{Q_I}\chi_{Q_I}.
$$
Recall that $[\sigma]_{B_{\infty}} \lesssim [\sigma]_{B_p}$ and that the condition $[\sigma]_{B_{\infty}}< \infty$ characterizes the membership of $\sigma$ in the class $B_{\infty}$, which properly contains $\bigcup_{p\ge 1} B_p$; see \cites{APR2019, MP2025}. 
\begin{thm}\label{WeightedWeakL1}
If $\sigma \in B_1$, then
$$
    \|P^+\|_{L^1(\mathbb{H},\sigma)\rightarrow L^{1,\infty}(\mathbb{H},\sigma)}\lesssim [\sigma]_{B_1}(\log [\sigma]_{B_{\infty}} + 1).
$$
\end{thm}

We conjecture that the bound in Theorem \ref{WeightedWeakL1} is sharp with respect to $[\sigma]_{B_1}$. In analogy, for Calder\'on-Zygmund singular integral operators $T$ and Muckenhoupt $A_1$ weights $w$ on $\mathbb{R}^n$, we have the following estimate from \cites{LOP2009, HP2013}: if $w \in A_1$, then 
$$
	\|T\|_{L^1(\mathbb{R}^n,w)\rightarrow L^{1,\infty}(\mathbb{R}^n,w)} \lesssim [w]_{A_1}(\log[w]_{A_{\infty}} +1),
$$
and this bound is the best possible in terms of $[w]_{A_1}$ when $T$ is the Hilbert transform; see \cite{LNO2020}.\looseness=-1

Aside from the current work, the only proofs of the weighted weak-type $(1,1)$ bound of $P$ for general $\sigma \in B_1$ exist in \cites{B198182, SW2022, GW2024} -- the argument handling the generality of \cite{SW2022} only allows for a qualitative bound, while optimized adaptations of the proofs in \cites{B198182, GW2024} yield 
\begin{align}\label{B1SquareBound}
    \|P\|_{L^1(\mathbb{H},\sigma)\rightarrow L^{1,\infty}(\mathbb{H},\sigma)}\lesssim [\sigma]_{B_1}^2.
\end{align}
This dependence was improved for the sub-class of $B_1$ weights of bounded hyperbolic oscillation introduced in \cite{APR2019}. For an interval $I$, we define $T_I := I \times \big(\frac{\ell(I)}{2},\ell(I)\big]$ and say that a weight $\sigma$ is of bounded hyperbolic oscillation if there exists $c_{\sigma} > 0$ such that
$$
    \sigma(z) \leq c_{\sigma} \sigma(w) 
$$
for all intervals $I \subseteq \mathbb{R}$ and all $z,w \in T_I$. The third and fourth authors proved in \cite{SW2023}*{Remark 1.11} that if $\sigma \in B_1$ is of bounded hyperbolic oscillation, then 
\begin{align}\label{APRWeakL1}
    \|P^+\|_{L^1(\mathbb{H},\sigma)\rightarrow L^{1,\infty}(\mathbb{H},\sigma)}\lesssim c_{\sigma}^{\frac{1}{8 [\sigma]_{B_\infty}}}[\sigma]_{B_1}(\log[\sigma]_{B_{\infty}}+1).
\end{align}
Theorem \ref{WeightedWeakL1} quantitatively improves \eqref{B1SquareBound} and qualitatively generalizes \eqref{APRWeakL1} to all $\sigma \in B_1$.

The bounded hyperbolic oscillation assumption ensures the following reverse-H\"older inequality: if $\sigma \in B_{\infty}$ is of bounded hyperbolic oscillation and $r\in \big(1,1+\frac{1}{8[\sigma]_{B_{\infty}}}\big)$, then 
\begin{equation}\label{ReverseHolder}
 \langle \sigma^r \rangle_{Q_I}^{\frac{1}{r}} \lesssim c_{\sigma}^{\frac{1}{r'}}\langle \sigma \rangle_{Q_I}
\end{equation}
for all intervals $I \subseteq \mathbb{R}$; see \cite{SW2023}*{Theorem 2.9}. Such a property, which does not hold for general $B_1$ weights, is crucial to the existing arguments for establishing sharp weighted weak-type $(1,1)$ bounds for singular integrals from \cites{LOP2009, HP2013, DsLR2016, FN2019}. As such, the main obstacle in proving our results is this lack of reverse-H\"older inequality -- we overcome this difficulty with new ideas from dyadic harmonic analysis. 

We present two proofs of Theorem \ref{WeightedWeakL1}. Our first proof, which is motivated by the argument of the second and third authors for operators satisfying sparse form domination from \cite{NS2024}, has the feature that it extends to all $p \in [1,\infty)$ and yields 
\begin{align}\label{WeakLpWithLog}
    \|P^+\|_{L^p(\mathbb{H},\sigma)\rightarrow L^{p,\infty}(\mathbb{H},\sigma)} \lesssim [\sigma]_{B_p}^{\frac{1}{p}}[\sigma]_{B_{\infty}}^{\frac{1}{p'}}(\log [\sigma]_{B_{\infty}}+1);
\end{align}
see Remark \textcolor{blue}{\ref{p>1Remark}.} We show that the logarithmic factor in \eqref{WeakLpWithLog} can be removed when $p>1$ and obtain the following sharp bound.
\begin{thm}\label{WeightedWeakLp}
If $p \in (1,\infty)$ and $\sigma \in B_p$, then  
$$
\|P^+\|_{L^p(\mathbb{H},\sigma)\rightarrow L^{p,\infty}(\mathbb{H},\sigma)} \lesssim [\sigma]_{B_p}^{\frac{1}{p}}[\sigma]_{B_{\infty}}^{\frac{1}{p'}} \lesssim [\sigma]_{B_p}.
$$
Moreover, this bound is sharp, even for the classical projection operator $P$, in the sense that if $\phi$ is an increasing function such that $\|P\|_{L^p(\mathbb{H},\sigma)\rightarrow L^{p,\infty}(\mathbb{H},\sigma)} \leq \phi([\sigma]_{B_p})$ for all $\sigma \in B_p$, then $\phi(t) \gtrsim t$ for all large enough $t$. 
\end{thm}
\noindent Before \eqref{WeakLpWithLog} and Theorem \ref{WeightedWeakLp}, the previously best known bound for $\|P^+\|_{L^p(\Omega,\sigma)\rightarrow L^{p,\infty}(\Omega,\sigma)}$ for $p>1$ was the one inherited from the sharp strong-type estimate \eqref{PStrongType}. We prove the upper bound in Theorem \ref{WeightedWeakLp} using testing conditions inspired by the arguments from \cites{LSU2009, HL2018}, and we deduce its optimality using a modification of the example from \cite{PR2013}*{Section 5}.

We obtain the following strong-type bound as a consequence of Theorem \ref{WeightedWeakLp} and the Marcinkiewicz interpolation theorem.
\begin{cor}\label{StrongPCorollary}
If $1\leq q<p<\infty$ and $\sigma\in B_q$, then
$$
    \|P^+\|_{L^p(\mathbb{H},\sigma)\to L^p(\mathbb{H},\sigma)}\lesssim [\sigma]_{B_q}^{\frac{1}{p}}[\sigma]_{B_{\infty}}^{\frac{1}{p'}}.
$$
\end{cor}
\noindent Corollary \ref{StrongPCorollary} is a refined Bergman space version of the following analogous bound for Calder\'{o}n-Zygmund operators with respect to $A_q$ weights obtained in \cite{HLMORSUt2012}*{Corollary 12.1}:
if $T$ is a Calder\'on-Zygmund operator, $1\leq q<p<\infty$, and $w \in A_q$, then
$$
    \|T\|_{L^p(\mathbb{R}^n,w)\rightarrow L^p(\mathbb{R}^n,w)} \lesssim [w]_{A_q}.
$$
This bound was conjectured in \cite{LO2012}*{Conjecture 1.3} and earlier progress was made in \cites{LOP2008, LOP2009}.

Our second proof of Theorem \ref{WeightedWeakL1} utilizes a different set of ideas inspired by \cite{CRr2020} that lead to the following mixed weighted weak-type $(1,1)$ bound. 
\begin{thm}\label{MixedWeak}
If either 
\begin{enumerate}
\addtolength{\itemsep}{0.1cm}
\item $u \in B_1$ and $v \in B_p(u)$ for $1<p<\infty$ are such that $uv$ is of bounded hyperbolic oscillation, or
\item $v \in B_1$ is of bounded hyperbolic oscillation and $u \in B_1(v)$,
\end{enumerate}
then 
$$
	\|v^{-1}P^+(fv)\|_{L^{1,\infty}(\mathbb{H},uv)} \lesssim C_{u,v}\|f\|_{L^1(\mathbb{H},uv)}
$$
for all $f \in L^1(\mathbb{H},uv)$, where, in the first case,
$$
  C_{u,v}:=(\log c_{uv}+1)[uv]_{B_{\infty}}[u]_{B_{1}}
     \left(\log([uv]_{B_{\infty}}[v]_{B_{p}(u)}[u]_{B_{1}}) + 1\right),
$$
and, in the second case,
$$
C_{u,v}:=(\log c_v+1)[v]_{B_{1}}[v]_{B_{\infty}}[u]_{B_{1}(v)}
  \big(\log([v]_{B_{1}}[uv]_{B_{\infty}})+1\big).
$$
\end{thm}
\noindent Above, the weighted $B_p(u)$ characteristics are defined in the same way as the $B_p$ constants, but with the Lebesgue measure replaced by the absolutely continuous measure with density $u$. \looseness=-1

The second case of Theorem \ref{MixedWeak} reduces to Theorem \ref{WeightedWeakL1} when $v \equiv 1$. Additionally, when $u\equiv1$, the first case of Theorem \ref{MixedWeak} gives the following Bergman projection analogue of \cite{OPR2016}*{Theorem 1.17}; see also \cite{S2020}*{Theorem 2}.
\begin{cor}
    If $p \in [1,\infty)$ and $v\in B_{p}$ is of bounded hyperbolic oscillation, then
$$
    \left\|v^{-1}P^+(fv)\right\|_{L^{1,\infty}(\mathbb{H},v)}
\lesssim(\log c_v+1)[v]_{B_{\infty}}(\log[v]_{B_{p}}+1)\|f\|_{L^1(\mathbb{H},v)}
$$
for all $f \in L^1(\mathbb{H},v)$. 
\end{cor}

Theorem \ref{MixedWeak} is a special case of the Bergman space version of Sawyer's conjecture, which states that if $T$ is a Calder\'on-Zygmund operator, $u \in A_1$, and $v \in A_{\infty}$, then 
$$
	\|v^{-1}T(fv)\|_{L^{1,\infty}(\mathbb{R}^n,uv)} \lesssim_{u,v} \|f\|_{L^1(\mathbb{R}^n,uv)}
$$
for all $f \in L^1(\mathbb{R}^n,uv)$. This estimate was proved in the case $u \in A_1$ and either $v \in A_1$ or $v \in A_{\infty}(u)$ in \cite{CuMP2005}, and the full result for general $u \in A_1$ and $v \in A_{\infty}$ was established in \cites{LOP2019}.  In analogy, we pose the following Bergman space version of Sawyer's conjecture. 
\begin{conj}\label{SawyersConjecture}
If $u \in B_1$ and $v \in B_{\infty}$, then there exists $C_{u,v}>0$ such that
$$
	\|v^{-1}P^+(fv)\|_{L^{1,\infty}(\mathbb{H},uv)} \lesssim C_{u,v} \|f\|_{L^1(\mathbb{H},uv)}
$$
for all $f \in L^1(\mathbb{H},uv)$, where $C_{u,v}$ depends on $[u]_{B_1}$ and $[v]_{B_{\infty}}$.
\end{conj}

As mentioned earlier, all of our bounds hold for any simple domain $\Omega \subseteq \mathbb{C}^n$ -- we call a pseudoconvex domain with smooth boundary simple if it is either \looseness=-1
\begin{enumerate}
\addtolength{\itemsep}{0.1cm}
    \item a bounded, finite-type domain in $\mathbb{C}^2$, 
    \item a bounded, decoupled, finite-type domain in $\mathbb{C}^n$,
    \item a bounded, convex, finite-type domain in $\mathbb{C}^n$, 
    \item a bounded, strongly pseudoconvex domain %with smooth boundary 
    in $\mathbb{C}^n$, or
    \item the upper half-plane $\mathbb{H}$ in $\C.$
\end{enumerate}
Notice that the unit disk and, more generally, the unit ball in $\mathbb{C}^n$ are simple domains. The reason for the lone addition of the upper half-plane is that the theory of unbounded pseudoconvex domains is much richer in higher dimensions, for example, in tube domains; see \cites{ BBPR, BBGNP}.
On the other hand, the upper half-plane is an ideal venue to present our ideas since its dyadic structure is geometrically cleaner than that of the unit disk, and our arguments in this setting adapt in a straightforward way to any bounded simple domain. 

The paper is organized as follows. In Section \ref{PreliminariesSection}, we define notation, describe the dyadic structure of $\mathbb{H}$, and collect preliminary results. In Section \ref{GeneralSection}, we prove our main results. We describe how our bounds extend to the generality of bounded simple domains in Section \ref{SimpleDomainsSection}.

%%%%%%%%%%%%%%%%%%%%%%%%%%%%%%%%%%%%%%%%%%%%%%%%%%%%%%%%%%%%%%%%%%

\section{Preliminaries}\label{PreliminariesSection}

We write $A\lesssim B$ if $A\leq CB$ for some $C>0$ that may depend on underlying parameters other than weights, and write $A\approx B$ if $A\lesssim  B\lesssim A$. We call a locally integrable and almost everywhere positive function a weight. Given a weight $\sigma$ on $\mathbb{H}$ and $p>0$, the spaces $L^p(\mathbb{H},\sigma)$ and $L^{p,\infty}(\mathbb{H},\sigma)$ are defined to be the respective collections of all $f$ such that
$$
    \|f\|_{L^p(\mathbb{H},\sigma)}:=\bigg(\int_{\mathbb{H}}|f|^p\sigma\,dA\bigg)^{\frac{1}{p}} <\infty
$$
and
$$
    \|f\|_{L^{p,\infty}(\mathbb{H},\sigma)}:=\sup_{\lambda>0}\lambda \sigma(\{z \in \mathbb{H}: |f(z)|>\lambda\})^{\frac{1}{p}}<\infty.
$$
Given spaces $\mathcal{X}$ and $\mathcal{Y}$ and $T:\mathcal{X}\rightarrow \mathcal{Y}$, we write $\|T\|_{\mathcal{X}\rightarrow\mathcal{Y}}$ for the infimum of all $C>0$ so that\looseness=-1
$$
    \|Tf\|_{\mathcal{Y}} \leq C\|f\|_{\mathcal{X}}
$$
for all $f \in \mathcal{X}$. 

We call a collection of intervals $\mathcal{D}$ a dyadic grid if there exists $\alpha \in \{0,\frac{1}{3},\frac{2}{3}\}$ for which 
$$
    \mathcal{D} = \mathcal{D}^{\alpha} := \{2^{j}([0,1) + \alpha + k)\colon j,k \in \mathbb{Z}\}.
$$
Note that dyadic grids satisfy the property that if $I, I' \in \mathcal{D}$, then either $I \cap I' = \emptyset$, $I \subseteq I'$, or $I \supsetneq I'$. Moreover, we can reduce from continuous to dyadic situations using the fact that for any interval $I\subseteq \mathbb{R}$, there exists $\alpha\in\{0,\frac{1}{3},\frac{2}{3}\}$ and $J\in\mathcal{D}^\alpha$ such that $I\subseteq J$ and $\ell(J)\leq 6\ell(I)$. Recalling that $Q_I = I \times (0,\ell(I)]$ and $T_I = I \times \big(\frac{\ell(I)}{2},\ell(I)\big]$, we  have that $\{T_I\}_{I \in \mathcal{D}}$ forms a partition of $\mathbb{H}$ for any dyadic grid $\mathcal{D}$, and that  
$$
    A(Q_I) = 2A(T_I)
$$
for any interval $I \subseteq \mathbb{R}$. Observe that for $I \subseteq \mathcal{D}$, we have 
\begin{align}\label{SparseCondition}
    A\bigg(\bigcup_{\substack{I' \in \mathcal{D}\\ I' \subsetneq I}} Q_{I'}\bigg) = \sum_{\substack{I' \in \mathcal{D}\\ I' \subsetneq I}} A(T_{I'}) = \frac{1}{2}A(Q_I).
\end{align}

We heavily rely on the following domination of $P^+$. Below, for a dyadic grid $\mathcal{D}$, we set
$$
    S_{\mathcal{D}}f := \sum_{I \in \mathcal{D}} \langle f\rangle_{Q_I}\chi_{Q_I}.
$$
\begin{prop}[\cite{PR2013}*{Proposition 3.4}]\label{SparseBound}
    If $f$ is a locally integrable function on $\mathbb{H}$, then 
    $$
        P^+f(z) \lesssim \sum_{\alpha \in \{0, \frac{1}{3}, \frac{2}{3}\}} S_{\mathcal{D}^{\alpha}}(|f|)(z)
    $$
    for all $z \in \mathbb{H}$. 
\end{prop}

We will use the following structural property of $B_{\infty}$ weights. 
\begin{lemma}\label{cup}
If $\sigma\in B_{\infty}$, $\mathcal{D}$ is a dyadic grid, and $\mathcal{E}\subseteq\mathcal{D}$, then 
$$
    \sum_{I\in\mathcal{E}}\sigma(Q_I) \leq2[\sigma]_{B_{\infty}}\sigma\bigg(\bigcup_{I\in\mathcal{E}}Q_I\bigg).$$
\end{lemma}

\begin{proof} 
For each positive integer $k$, let
$$
    \mathcal{E}_k:=\{I\in\mathcal{E}\colon I\subseteq[-2^k,2^k) \,\, \text{and}\,\, \ell(I)>2^{-k}\}.
$$
Then $\mathcal{E}_k$ is a finite collection of dyadic intervals. Let $\mathcal{E}^*_k$ be the family of maximal intervals of $\mathcal{E}_k$ with respect to inclusion. Then for each $J\in\mathcal{E}^*_k$, we have 
\begin{align*}
\sum_{\substack{I\in\mathcal{E}_k\\ I\subseteq J}}\sigma(Q_I)
&=2\sum_{\substack{I\in\mathcal{E}_k\\ I\subseteq J}}\langle\sigma\rangle_{Q_I}A(T_I)\\
%&\leq\frac{1}{1-\rho}\sum_{K\in\mathcal{E}_k\cap\DD_{\ell}(K')}\langle\sigma\rangle_{\widehat{K}}V(K)\\
&\leq2\sum_{\substack{I\in\mathcal{E}_k\\ I\subseteq J}}\int_{T_I}M(\sigma\chi_{Q_J})\,dA\\
&\leq2\int_{Q_J}M(\sigma\chi_{Q_J})\,dA\\
&\leq2[\sigma]_{B_{\infty}}\sigma(Q_J),
\end{align*}
and, consequently, 
\begin{align*}
\sum_{I\in\mathcal{E}_k}\sigma(Q_I)
&=\sum_{J\in\mathcal{E}^*_k}\sum_{\substack{I\in\mathcal{E}_k\\ I\subseteq J}}\sigma(Q_I)\\
&\leq2[\sigma]_{B_{\infty}}\sum_{J\in\mathcal{E}^*_k}\sigma(Q_J)\\
&=2[\sigma]_{B_{\infty}}\sigma\bigg(\bigcup_{J\in\mathcal{E}^*_k}Q_J\bigg)\\
&\leq2[\sigma]_{B_{\infty}}\sigma\bigg(\bigcup_{I\in\mathcal{E}}Q_I\bigg).
\end{align*}
Letting $k\to\infty$ completes the proof.
\end{proof}

We will use the following bounds for the Carleson tent maximal operator. The arguments are standard but included for completeness.
\begin{lemma}\label{MaximalWeakType}
    If $p \in (1,\infty)$ and $\sigma \in B_p$, then 
    $$
        \|M\|_{L^p(\mathbb{H},\sigma)\rightarrow L^p(\mathbb{H},\sigma)} \lesssim [\sigma]_{B_p}^{\frac{p'}{p}}.
    $$
    If $p \in [1,\infty)$ and $\sigma \in B_p$, then 
    $$
        \|M\|_{L^p(\mathbb{H},\sigma)\rightarrow L^{p,\infty}(\mathbb{H},\sigma)} \lesssim [\sigma]_{B_p}^{\frac{1}{p}}.
    $$
\end{lemma}

\begin{proof}
Let $f \in L^p(\mathbb{H}, \sigma)$ and assume without loss of generality $f \geq 0.$ It is enough to prove the bound for the Carleson tent maximal operator restricted to a fixed dyadic grid $\mathcal D$, $M^{\mathcal{D}}$. More generally, given a weight $\sigma$, we define the corresponding weighted maximal operator
$$
    M^{\mathcal{D}}_{\sigma}f(z):= \sup_{I \in \mathcal{D}} \langle |f|\rangle_{\sigma, Q_I}\chi_{Q_I}(z), 
$$
where $\langle f\rangle_{\sigma, Q_I} := \frac{1}{\sigma(Q_I)}\int_{Q_I} f \sigma\,dA$. Standard arguments show that $M^{\mathcal{D}}_{\sigma}$ is bounded on $L^q(\mathbb H, \sigma)$ for any $1<q<\infty$ with constant independent of the weight $\sigma$. 

The strong-type estimate then follows from the pointwise bound
$$ 
    M^{\mathcal{D}}f(z) \leq [\sigma]_{B_p}^{\frac{p'}{p}} \left(M^{\mathcal{D}}_{\sigma}(\sigma^{-1}(M^{\mathcal{D}}_\omega (f \omega^{-1}) )^{p-1}  )(z)  \right )^{\frac{p'}{p}}
$$
for $f \in L^1(\mathbb H)$ and almost every $z \in \mathbb H$, where $\omega := \sigma^{-\frac{p'}{p}}$ is the dual weight of $\sigma$. The bounds for the weighted maximal functions $M^{\mathcal{D}}_\sigma$ and $M^{\mathcal{D}}_\omega$ on $L^{p'}(\mathbb H, \sigma)$ and $L^p(\mathbb H, \omega)$, respectively, then yield the desired strong-type bound. 

To prove the weak-type $(p,p)$ estimate, fix $\lambda>0$ and consider 
$$ 
    E_\lambda:= \{z \in \mathbb H \colon M^{\mathcal{D}}f(z)>\lambda\}.
$$ 
Let $\{I_j\}$ be the collection of maximal dyadic intervals in $\mathcal{D}$  such that $\langle f \rangle_{Q_{I_j}}>\lambda$ and notice that $\{ Q_{I_j}\}$ is a disjoint cover for $E_\lambda$. We estimate
\begin{align*}
\sigma(E_\lambda) & \leq \sum_j \sigma(Q_{I_j}) \frac{\langle f \rangle_{Q_{I_j}}^p}{\lambda^p} \\
& \leq \frac{1}{\lambda^p} \sum_{j} \langle \sigma \rangle_{Q_{I_j}} \langle \sigma^{-\frac{p'}{p}} \rangle_{Q_{I_j}}^{\frac{p}{p'}} \int_{Q_{I_j}} |f|^p \sigma \, dA \\
& \leq \frac{[\sigma]_{B_p}}{\lambda^p} \sum_j \int_{Q_{I_j}} |f|^p \sigma \, dA\\
& \leq \frac{[\sigma]_{B_p}}{\lambda^p} \|f\|_{L^p(\mathbb H, \sigma)}^p,
\end{align*}
which proves the desired bound. 
\end{proof}

The following lemma justifies the appearance of $[uv]_{B_{\infty}}$ in Theorem \ref{MixedWeak}.
\begin{lemma}\label{uvBpLemma}
If $p \in [1,\infty)$, $u\in B_{1}$, and $v\in B_{p}(u)$, then $uv\in B_{p}$ with 
$$
    [uv]_{B_{p}}\leq[u]_{B_{1}}^p[v]_{B_{p}(u)}.
$$
\end{lemma}
\noindent As the proof of Lemma \ref{uvBpLemma} is a simple adaptation of that of \cite{CuMP2005}*{Lemma 2.1}, we omit it.\looseness=-1

%%%%%%%%%%%%%%%%%%%%%%%%%%%%%%%%%%%%%%%%%%%%%%%%%%%%%%%%%%%%%%%%%%

\section{Main Results}\label{GeneralSection}

\subsection{First proof of Theorem \ref{WeightedWeakL1}}

We give our first proof of Theorem \ref{WeightedWeakL1}, which follows the ideas in the proof of \cite{NS2024}*{Theorem A}. 
\begin{proof}[Proof of Theorem \ref{WeightedWeakL1}]
By Proposition \ref{SparseBound}, it suffices to prove the bound with $P$ replaced by $S_{\mathcal{D}}$ for an arbitrary dyadic grid $\mathcal{D}$. Let $f \in L^1(\mathbb{H},\sigma)$ and assume without loss of generality that $f$ is nonnegative, bounded, and normalized in $L^1(\mathbb{H},\sigma)$. Appealing to \cite{Grafakos1}*{Exercise 1.4.14}, we have the equivalence
$$
    \|\mathcal{S}_{\mathcal{D}}f\|_{L^{1,\infty}(\mathbb{H},\sigma)} \approx \sup_{\substack{E \subseteq \mathbb{H}\\ 0< \sigma(E)<\infty}}\inf_{\substack{E' \subseteq E \\ \sigma(E') \ge \frac{1}{2}\sigma(E)}} \langle \mathcal{S}_{\mathcal{D}}f, \sigma\chi_{E'}\rangle.
$$
Fix $E \subseteq \mathbb{H}$ with $0< \sigma(E)<\infty$ and put 
$$
    E':= \{z \in E: M^{\mathcal{D}}f(z) \leq \gamma\},
$$
where $\gamma:= \frac{2 [\sigma]_{B_1}}{\sigma(E)}$. By Lemma \ref{MaximalWeakType}, we have that $\sigma(E') \ge \frac{1}{2}\sigma(E)$, and so it suffices to prove
\begin{align}\label{MainEstimate}
    \langle\mathcal{S}_{\mathcal{D}}f,\sigma\chi_{E'}\rangle = \sum_{I \in \mathcal{D}}\langle f\rangle_{Q_I}\langle \sigma\chi_{E'}\rangle_{Q_I}A(Q_I)\lesssim [\sigma]_{B_1}(\log [\sigma]_{B_{\infty}}+1).
\end{align}

If $I \in \mathcal{D}$ satisfies $\langle f\rangle_{Q_I}> \gamma$, then $Q_I \cap E' = \emptyset$, so we only need to bound the sum over intervals in $\mathcal{D}_+:= \{I \in \mathcal{D} \colon \langle f\rangle_{Q_I}\leq \gamma\}$. Given $\lambda \in (0,1)$, set $\mathcal{D}_{\lambda}:= \{I \in \mathcal{D}_+\colon \sigma(E'\cap Q_I)> \lambda\sigma(Q_I)\}$ and $\mathcal{D}_{\lambda}^*:=\{I \in \mathcal{D}_{\lambda}\colon I \text{ is maximal with respect to inclusion}\}$. Then 
\begin{align*}
    \sum_{I \in \mathcal{D}_+}\langle f\rangle_{Q_I}&\langle \sigma\chi_{E'}\rangle_{Q_I}A(Q_I) = \sum_{I \in \mathcal{D}_+}\langle f\rangle_{Q_I}\sigma(E' \cap Q_I)\\
    &= \sum_{I \in \mathcal{D}_+}\langle f\rangle_{Q_I}\sigma(Q_I)\int_0^{\frac{\sigma(E'\cap Q_I)}{\sigma(Q_I)}} \,d\lambda\\
    &= \int_0^1\sum_{I \in \mathcal{D}_{\lambda}}\langle f\rangle_{Q_I}\sigma(Q_I)\,d\lambda\\
    &= \int_0^1\sum_{I_0 \in \mathcal{D}_{\lambda}^*}\sum_{\substack{I \in \mathcal{D}_{\lambda}\\ I \subseteq I_0}} \langle f\rangle_{Q_I}\sigma(Q_I)\,d\lambda.
\end{align*}

We estimate the inner summation above. Fix $I_0 \in \mathcal{D}_{\lambda}^*$ and $\lambda \in (0,1)$. Using H\"older's inequality, we have for any $q>1$ that
$$
    \sum_{\substack{I \in \mathcal{D}_{\lambda}\\ I \subseteq I_0}} \langle f\rangle_{Q_I}\sigma(Q_I) \leq \bigg(\sum_{\substack{I \in \mathcal{D}_{\lambda} \\ I \subseteq I_0}}\langle f\rangle_{Q_I}^q\sigma(Q_I)\bigg)^{\frac{1}{q}}\bigg(\sum_{\substack{I \in \mathcal{D}_{\lambda} \\ I \subseteq I_0}}\sigma(Q_I)\bigg)^{\frac{1}{q'}}.
$$
Using Lemma \ref{cup}, we have 
$$
    \sum_{\substack{I \in \mathcal{D}_{\lambda} \\ I \subseteq I_0}}\sigma(Q_I) \leq 2[\sigma]_{B_{\infty}}\sigma(Q_{I_0}),
$$
and, therefore 
$$
    \sum_{\substack{I \in \mathcal{D}_{\lambda}\\ I \subseteq I_0}} \langle f\rangle_{Q_I}\sigma(Q_I) \leq 2^{\frac{1}{q'}}[\sigma]_{B_{\infty}}^{\frac{1}{q'}}\sigma(Q_{I_0})^{\frac{1}{q'}}\bigg(\sum_{\substack{I \in \mathcal{D}_{\lambda} \\ I \subseteq I_0}}\langle f\rangle_{Q_I}^q\sigma(Q_I)\bigg)^{\frac{1}{q}}.
$$

Now, decompose $\mathcal{D}_{\lambda} = \bigcup_{k=0}^{\infty}\mathcal{E}_k$, where
$$
    \mathcal{E}_k:= \{I \in \mathcal{D}_{\lambda}: C^{-(k+1)}\gamma < \langle f\rangle_{Q_I}\leq C^{-k}\gamma\}
$$
for a fixed $C \in (1,2)$. For $I \in \mathcal{E}_k$, let 
$$
    E_I := Q_I\setminus \bigcup_{\substack{I' \in \mathcal{E}_k\\ I' \subsetneq I}} Q_{I'}.
$$
Letting $\text{ch}_{\mathcal{E}_k}(I)$ denote the collection of maximal $I'\in\mathcal{E}_k$ that are properly contained in $I$, we have that, by \eqref{SparseCondition}, if $I \in \mathcal{E}_k$, then
\begin{align*}
    \int_{Q_I} f\,dA &= \int_{E_I} f\,dA + \sum_{I' \in \text{ch}_{\mathcal{E}_k}(I)} \int_{Q_{I'}} f\,dA\\
    &\leq \int_{E_I} f\,dA + C^{-k}\gamma\sum_{I' \in \text{ch}_{\mathcal{E}_k}(I)} A(Q_{I'})\\
    &\leq \int_{E_I} f\,dA + C^{-k}\gamma\frac{1}{2} A(Q_I)\\
    &\leq \int_{E_I} f\,dA + \frac{C}{2} \int_{Q_I} f\,dA,
\end{align*}
which, since $C <2$, gives 
\begin{align}\label{MagicLemma}
    \int_{Q_I} f\,dA \lesssim \int_{E_I} f\,dA.
\end{align}
Using the above estimate, the definition of $\mathcal{E}_k$, \eqref{MagicLemma}, and the disjointness of the $E_I$, we have
\begin{align*}
    \sum_{\substack{I \in \mathcal{D}_{\lambda} \\ I \subseteq I_0}}\langle f\rangle_{Q_I}\sigma(Q_I) &\leq 2^{\frac{1}{q'}}[\sigma]_{B_{\infty}}^{\frac{1}{q'}}\sigma(Q_{I_0})^{\frac{1}{q'}}\bigg(\sum_{k=0}^{\infty}\sum_{\substack{I \in \mathcal{E}_k\\ I \subseteq I_0}}\langle f\rangle_{Q_I}^q\sigma(Q_I)\bigg)^{\frac{1}{q}}\\
    &\lesssim [\sigma]_{B_{\infty}}^{\frac{1}{q'}}\sigma(Q_{I_0})^{\frac{1}{q'}}\bigg(\sum_{k=0}^{\infty} C^{-k(q-1)}\gamma^{q-1}\sum_{\substack{I \in \mathcal{E}_k \\ I \subseteq I_0}} \langle f\rangle_{Q_I}\sigma(Q_I)\bigg)^{\frac{1}{q}}\\
    &\lesssim [\sigma]_{B_{\infty}}^{\frac{1}{q'}}\gamma^{\frac{1}{q'}}\sigma(Q_{I_0})^{\frac{1}{q'}}\bigg(\sum_{k=0}^{\infty}C^{-k(q-1)}\sum_{\substack{I \in \mathcal{E}_k\\ I \subseteq I_0}}\int_{E_I}f M(\sigma\chi_{Q_{I_0}})\,dA\bigg)^{\frac{1}{q}}\\
    &\lesssim [\sigma]_{B_{\infty}}^{\frac{1}{q'}}[\sigma]_{B_1}^{\frac{1}{q'}}\sigma(E)^{-\frac{1}{q'}}\sigma(Q_{I_0})^{\frac{1}{q'}}(1-C^{1-q})^{-\frac{1}{q}}\|f\|_{L^1(Q_{I_0},M(\sigma\chi_{Q_{I_0}}))}^{\frac{1}{q}}.
\end{align*}
Combining this with the first bound and using $(1-C^{1-q})^{-\frac{1}{q}}\approx q'$ and $\sigma(Q_{I_0})\leq \frac{1}{\lambda}\sigma(E'\cap Q_{I_0})$, we have an estimate of the left-hand side of \eqref{MainEstimate} by a constant times
\begin{align*}
    q'[\sigma]_{B_{\infty}}^{\frac{1}{q'}}[\sigma]_{B_1}^{\frac{1}{q'}}&\sigma(E)^{-\frac{1}{q'}}\int_0^1\sum_{I_0 \in \mathcal{D}_{\lambda}^*}\|f\|_{L^1(Q_{I_0},M\sigma)}^{\frac{1}{q}}\sigma(Q_{I_0})^{\frac{1}{q'}}\,d\lambda\\
    &\leq q'[\sigma]_{B_{\infty}}^{\frac{1}{q'}}[\sigma]_{B_1}^{\frac{1}{q'}}\sigma(E)^{-\frac{1}{q'}}\int_0^1\bigg(\sum_{I_0 \in \mathcal{D}_{\lambda}^*}\|f\|_{L^1(Q_{I_0},M\sigma)}\bigg)^{\frac{1}{q}}\bigg(\sum_{I_0 \in \mathcal{D}_{\lambda}^*}\sigma(Q_{I_0})\bigg)^{\frac{1}{q'}}\,d\lambda\\
    &\leq q'[\sigma]_{B_{\infty}}^{\frac{1}{q'}}[\sigma]_{B_1}^{\frac{1}{q'}}\sigma(E)^{-\frac{1}{q'}}\|f\|_{L^1(\mathbb{H},M\sigma)}^{\frac{1}{q}}\int_0^1\lambda^{-\frac{1}{q'}}\bigg(\sum_{I_0 \in \mathcal{D}_{\lambda}^*}\sigma(E'\cap Q_{I_0})\bigg)^{\frac{1}{q'}}\,d\lambda\\
    &\leq qq'[\sigma]_{B_{\infty}}^{\frac{1}{q'}}[\sigma]_{B_1}^{\frac{1}{q'}}\|f\|_{L^1(\mathbb{H},M\sigma)}^{\frac{1}{q}}\\
    &\leq qq'[\sigma]_{B_{\infty}}^{\frac{1}{q'}}[\sigma]_{B_1}.
\end{align*}
Taking $q'=\log [\sigma]_{B_{\infty}} + 2$, we have $q \leq 2$ and $[\sigma]_{B_{\infty}}^{\frac{1}{q'}}\leq e$, and hence the result follows.
\end{proof}

\begin{rem}\label{p>1Remark}
A slight modification of the above proof of Theorem \ref{WeightedWeakL1} gives the bound 
$$
    \|P^+\|_{L^p(\mathbb{H},\sigma)\rightarrow L^{p,\infty}(\mathbb{H},\sigma)} \lesssim [\sigma]_{B_p}^{\frac{1}{p}}[\sigma]_{B_{\infty}}^{\frac{1}{p'}}(\log [\sigma]_{B_{\infty}}+1)
$$
for $p \in (1,\infty)$ and $\sigma \in B_p$. In particular, the only modifications of the above proof of Theorem \ref{WeightedWeakL1} are using the more general equivalence
$$
    \|\mathcal{S}_{\mathcal{D}}f\|_{L^{p,\infty}(\mathbb{H},\sigma)} \approx \sup_{\substack{E \subseteq \mathbb{H}\\ 0< \sigma(E)<\infty}}\inf_{\substack{E' \subseteq E \\ \sigma(E') \ge \frac{1}{2}\sigma(E)}} \sigma(E)^{-\frac{1}{p'}}\langle \mathcal{S}_{\mathcal{D}}f, \sigma\chi_{E'}\rangle;
$$
defining $\gamma := \left(\frac{2}{\sigma(E)}\right)^{\frac{1}{p}}\|M^{\mathcal{D}}\|_{L^p(\mathbb{H},\sigma) \rightarrow L^{p,\infty}(\mathbb{H},\sigma)}$; using \eqref{MagicLemma}, H\"older's inequality, the $B_p$ condition, and Lemma \ref{cup} to control the inner sum on the second line of the penultimate display by \looseness=-1
\begin{align*}
    \sum_{\substack{I \in \mathcal{E}_k \\ I \subseteq I_0}} \langle f\rangle_{Q_I}\sigma(Q_I) & \lesssim \sum_{\substack{I \in \mathcal{E}_k \\ I \subseteq I_0}} \langle f^p\sigma\rangle_{E_I}^{\frac{1}{p}} \langle \sigma^{-\frac{p'}{p}}\rangle_{Q_I}^{\frac{1}{p'}}\langle\sigma\rangle_{Q_I} A(Q_I)\\
    &\lesssim [\sigma]_{B_p}^{\frac{1}{p}}\sum_{\substack{I \in \mathcal{E}_k \\ I \subseteq I_0}} \langle f^p\sigma\rangle_{E_I}^{\frac{1}{p}}\langle\sigma\rangle_{Q_I}^{\frac{1}{p'}}A(E_I)\\
    &\leq [\sigma]_{B_p}^{\frac{1}{p}}\bigg(\sum_{\substack{I \in \mathcal{E}_k \\ I \subseteq I_0}}\langle f^p\sigma\rangle_{E_I}A(E_I)\bigg)^{\frac{1}{p}}\bigg(\sum_{\substack{I \in \mathcal{E}_k \\ I \subseteq I_0}}\langle \sigma\rangle_{Q_I}A(E_I)\bigg)^{\frac{1}{p'}}\\
    &\lesssim [\sigma]_{B_p}^{\frac{1}{p}}[\sigma]_{B_{\infty}}^{\frac{1}{p'}} \|f\|_{L^p(Q_{I_0},\sigma)}\sigma(Q_{I_0})^{\frac{1}{p'}};
\end{align*}
and bounding the corresponding final display in the proof of Theorem \ref{WeightedWeakL1} using the bound $\|M\|_{L^p(\mathbb{H},\sigma)\rightarrow L^{p,\infty}(\mathbb{H},\sigma)} \lesssim [\sigma]_{B_p}^{\frac{1}{p}}$ of Lemma \ref{MaximalWeakType} and H\"older's inequality with powers $pq$ and $(pq)'$: \looseness=-1
\begin{align*}
    q'[\sigma]_{B_p}^{\frac{1}{pq}}&[\sigma]_{B_{\infty}}^{\frac{1}{q'}+\frac{1}{qp'}}\gamma^{\frac{1}{q'}}\int_0^1\sum_{I_0 \in \mathcal{D}_{\lambda}^*}\|f\|_{L^p(Q_{I_0},\sigma)}^{\frac{1}{q}}\sigma(Q_{I_0})^{\frac{1}{q'}+\frac{1}{qp'}}\,d\lambda\\
    &\lesssim q'[\sigma]_{B_p}^{\frac{1}{pq}+\frac{1}{pq'}}[\sigma]_{B_{\infty}}^{\frac{1}{q'}+\frac{1}{qp'}}\sigma(E)^{-\frac{1}{pq'}}\int_0^1\bigg(\sum_{I_0 \in \mathcal{D}_{\lambda}^*}\|f\|_{L^p(Q_{I_0},\sigma)}^p\bigg)^{\frac{1}{pq}}\bigg(\sum_{I_0 \in \mathcal{D}_{\lambda}^*}\sigma(Q_{I_0})\bigg)^{\frac{pq-1}{pq}}\,d\lambda\\
    &\leq q'[\sigma]_{B_p}^{\frac{1}{p}}[\sigma]_{B_{\infty}}^{\frac{1}{q'}+\frac{1}{qp'}}\sigma(E)^{-\frac{1}{pq'}}\int_0^1\lambda^{-\frac{pq-1}{pq}}\bigg(\sum_{I_0 \in \mathcal{D}_{\lambda}^*}\sigma(E'\cap Q_{I_0})\bigg)^{\frac{pq-1}{pq}}\,d\lambda\\
    &\leq pqq'[\sigma]_{B_p}^{\frac{1}{p}}[\sigma]_{B_{\infty}}^{\frac{1}{q'}+\frac{1}{qp'}}\sigma(E)^{\frac{1}{p'}}.
\end{align*}
The proof is finished by choosing $q$ such that $q'=\log [\sigma]_{B_{\infty}} + 2$, as before.
\end{rem}

\subsection{Proof of Theorem \ref{WeightedWeakLp} and Corollary \ref{StrongPCorollary}}

Our proof strategy essentially follows that of Hyt\"onen and Li in \cite{HL2018}. We will need several lemmata, the first of which is known as Kolmogorov's lemma; see \cite{Grafakos2024}*{Theorem 1.2.10}.
\begin{lemma}\label{lem:kolmogorov}
Let $(S,\mu)$ be a $\sigma$-finite measure space, $f$ be a measurable function on $S$, and $p \in(0,\infty)$. Then $f\in L^{p,\infty}(S,\mu)$ if and only if there exists $C>0$ and $\theta \in (0,p)$ such that 
\[
\int_E\!|f|^\theta\,\mathrm{d}\mu\leq\tfrac{p}{p-\theta}C^\theta\mu(E)^{1-\frac{\theta}{p}}
\]
for all $E\subseteq S$ with $0<\mu(E)<\infty$, and the optimal constant $C>0$ satisfies
\[
C\leq \|f\|_{L^{p,\infty}(S,\mu)}\leq \big(\tfrac{p}{p-\theta}\big)^{\frac{1}{\theta}}C.
\]
\end{lemma}

The following lemma is a variant of the testing condition of \cite{LSU2009}. Our proof follows an alternative strategy presented in \cite{NSS2024}.
\begin{lemma}\label{lem:lsutesting}
If $\mathcal{D}$ is a dyadic grid, $\sigma$ is a weight, and $p \in (1,\infty)$, then
\begin{equation}\label{eq:thmdtesting}
\|S_{\mathcal{D}}\|_{L^p(\mathbb{H},\sigma)\to L^{p,\infty}(\mathbb{H},\sigma)} \lesssim \sup_{I_0\in\mc{D}}\sigma(Q_{I_0})^{-\frac{1}{p'}}\bigg\|\sum_{\substack{I \in\mc{D}\\ I \subseteq I_0}}\langle\sigma\rangle_{Q_I}\ind_{Q_I}\bigg\|_{L^{p'}(\mathbb{H}, \sigma^{1-p'})}.
\end{equation}
\end{lemma}
\begin{proof}
Let $f \in L^p(\mathbb{H},\sigma)$ and assume without loss of generality that $f$ is nonnegative and bounded. By Lemma~\ref{lem:kolmogorov} with $\theta=1$, it suffices to prove that for any $E\subseteq\mathbb{H}$ with $0<\sigma(E)<\infty$, we have 
\[
    \int_{E}\!|S_{\mc{D}}f|\sigma\,\mathrm{d}A \leq \sum_{I \in\mc{D}}\langle f\rangle_{Q_I}\sigma(E\cap Q_I )\lesssim \mc{M}\|f\|_{L^p(\mathbb{H},\sigma)}\sigma(E)^\frac{1}{p'}.
\]
Fix $\lambda>0$, let
\[
\mc{D}_\lambda:=\big\{I\in\mc{D}\colon \sigma(E\cap Q_I )>\lambda\sigma(Q_I)\big\},
\]
and let $\mc{D}^\ast_\lambda$ denote the $I \in\mc{D}_\lambda$ for which $Q_I$ is maximal with respect to inclusion. Note that 
\[
\sum_{I_0\in\mc{D}^\ast_\lambda}\ind_{Q_{I_0}}\leq\ind_{\{M^{\mc{D}}_\sigma(\ind_E)>\lambda\}}
\]
almost everywhere, and thus, setting 
\[
\mc{M}:=\sup_{I_0\in\mc{D}}\sigma(Q_{I_0})^{-\frac{1}{p'}}\bigg\|\sum_{\substack{I \in\mc{D}\\ I \subseteq I_0}}\langle \sigma\rangle_{Q_I}\ind_{Q_I}\bigg\|_{L^{p'}(\mathbb{H},\sigma^{1-p'})},
\]
we have
\begin{align*}
\sum_{I \in\mc{D}_\lambda}\langle f\rangle_{Q_I}\sigma(Q_I)
&=\sum_{I_0\in\mc{D}^\ast_\lambda}\sum_{\substack{I \in\mc{D}_\lambda\\ I \subseteq I_0}}\langle f\rangle_{Q_I}\sigma(Q_I)\\
&=\sum_{I_0\in\mc{D}^\ast_\lambda}\int_{Q_{I_0}}\!f\sum_{\substack{I \in\mc{D}_\lambda\\ I \subseteq I_0}}\langle\sigma\rangle_{Q_I}\ind_{Q_I}\,\mathrm{d}A\\
&\leq\mc{M}\sum_{I_0\in\mc{D}^\ast_\lambda}\bigg(\int_{Q_{I_0}}\!f^p \sigma\,\mathrm{d}A\bigg)^{\frac{1}{p}}\sigma(Q_{I_0})^{\frac{1}{p'}}\\
&\leq\mc{M}\bigg(\sum_{I_0\in\mc{D}^\ast_\lambda} \int_{Q_{I_0}}\!f^p \sigma\,\mathrm{d}A\bigg)^{\frac{1}{p}}\bigg(\sum_{I_0\in\mc{D}^\ast_\lambda}\sigma(Q_{I_0})\bigg)^{\frac{1}{p'}}\\
&\leq\mc{M}\|f\|_{L^p(\mathbb{H},\sigma)}\sigma(\big(\{M^{\mc{D}}_\sigma(\ind_E)>\lambda\}\big)^{\frac{1}{p'}},
\end{align*}
where we have used H\"older's inequality in both the first and the second inequalities. Using the above estimate and the boundedness of $M^{\mathcal{D}}_\sigma$ on $L^{p',1}(\mathbb{H},\sigma)$ (which follows from the Marcinkiewicz interpolation theorem), we have 
\begin{align*}
\sum_{I \in\mc{D}}\langle f\rangle_{Q_I}\sigma(E\cap Q_I)
&=\sum_{I \in\mc{D}}\langle f\rangle_{Q_I}\bigg(\int_0^{\frac{\sigma(E\cap Q_I)}{\sigma(Q_I)}}\!\,\mathrm{d}\lambda\bigg)\sigma(Q_I)\\
&=\int_0^{\infty} \sum_{I \in\mc{D}_\lambda}\langle f\rangle_{Q_I}\sigma(Q_I)\,\mathrm{d}\lambda\\
&\leq\mc{M}\|f\|_{L^p(\mathbb{H},\sigma)}\|M^{\mc{D}}_\sigma(\ind_E)\|_{L^{p',1}(\mathbb{H},\sigma)}\\
&\lesssim\mc{M}\|f\|_{L^p(\mathbb{H},\sigma)}\sigma(E)^{\frac{1}{p'}}.
\end{align*}
This proves the assertion.
\end{proof}

\begin{lemma}\label{lem:multilineartesting3}
If $\mathcal{D}$ is a dyadic grid and $\alpha_1,\alpha_2\in[0,1)$ satisfy $\alpha_1+\alpha_2<1$, then 
\[
\sum_{\substack{I\in\mc{D}\\ I\subseteq J}}\langle \sigma\rangle_{Q_I}^{\alpha_1}\langle\omega\rangle_{Q_I}^{\alpha_2}A(Q_I)\lesssim\langle \sigma\rangle_{Q_J}^{\alpha_1}\langle\omega\rangle_{Q_J}^{\alpha_2}A(Q_J)
\]
for all weights $\sigma$ and $\omega$ and all $J\in\mc{D}$.
\end{lemma}
\begin{proof}
Set $\theta_j:=\alpha_j+\tfrac{1}{2}(1-\alpha_1-\alpha_2)>\alpha_j$ for $j=1,2$, and note that $\theta_1+\theta_2=1$. 
Using \eqref{SparseCondition}, H\"older's inequality, Lemma~\ref{lem:kolmogorov} with $p=1$, $\theta=\tfrac{\alpha_1}{\theta_1},\tfrac{\alpha_2}{\theta_2}$, and $\|M^{\mc{D}(I)}\|_{L^1(Q_I)\to L^{1,\infty}(Q_I)}\leq 1$, we have
\begin{align*}
\sum_{\substack{I\in\mc{D}\\ I\subseteq J}}&\langle \sigma\rangle_{Q_I}^{\alpha_1}\langle\omega\rangle_{Q_I}^{\alpha_2}A(Q_I) \lesssim \int_{Q_J}\!(M^{\mc{D}}(\sigma \chi_{Q_J}))^{\alpha_1}(M^{\mc{D}}(\omega \chi_{Q_J}))^{\alpha_2}\,\mathrm{d}A\\
&
\leq\bigg(\int_{Q_J}\!(M^{\mc{D}}(\sigma \chi_{Q_J}))^{\frac{\alpha_1}{\theta_1}}\,\mathrm{d}A\bigg)^{\theta_1}\bigg(\int_{Q_J}\!(M^{\mc{D}}(\omega \chi_{Q_J}))^{\frac{\alpha_2}{\theta_2}}\,\mathrm{d}A\bigg)^{\theta_2}\\
&\lesssim\Big(\|M^{\mc{D}}(\sigma \chi_{Q_J})\|_{L^{1,\infty}(Q_J)}^{\frac{\alpha_1}{\theta_1}}A(Q_J)^{1-\frac{\alpha_1}{\theta_1}}\Big)^{\theta_1}
\Big(\|M^{\mc{D}}(\omega \chi_{Q_J}) \|_{L^{1,\infty}(Q_J)}^{\frac{\alpha_2}{\theta_2}}A(Q_J)^{1-\frac{\alpha_2}{\theta_2}}\Big)^{\theta_2}\\
&\lesssim \Big(\|\sigma\|_{L^1(Q_J)}^{\alpha_1}A(Q_J)^{-\alpha_1}\Big)\Big(\|\omega\|_{L^1(Q_J)}^{\alpha_2}A(Q_J)^{-\alpha_2}\big)A(Q_J)^{\theta_1+\theta_2}\\
&=\langle \sigma\rangle_{Q_J}^{\alpha_1}\langle\omega\rangle_{Q_J}^{\alpha_2}A(Q_J).
\end{align*}
This proves the result.
\end{proof}

The next lemma uses \cite{COV2004}*{Proposition~2.2}, which shows that
\begin{equation}\label{eq:covlemma}
\bigg\|\sum_{I\in\mc{F}}a_{I}\ind_{Q_I}\bigg\|_{L^q(\mathbb{H},v)} \approx \bigg(\sum_{J\in\mc{F}}\bigg(\frac{1}{v(Q_J)}\sum_{\substack{I\in\mc{F}\\ I \subseteq J}}a_{I}v(Q_I)\bigg)^{q-1}a_{J}v(Q_J)\bigg)^{\frac{1}{q}}
\end{equation}
for any $q\in[1,\infty)$, weight $v$, collection $\mc{F}\subseteq\mc{D}$, and $\{a_{I}\}_{I\in\mc{F}}\subseteq [0,\infty)$. For this, note that the collection $(Q_I)_{I\in\mc{F}}$ is a subcollection of a dyadic grid in the plane.

\begin{lemma}\label{lem:thme}
If $\mathcal{D}$ is a dyadic grid, $p \in (1,\infty)$, and $\sigma\in B_p$, then
\[
\bigg\|\sum_{I\in\mc{E}}\langle \sigma\rangle_{Q_I}\ind_{Q_I}\bigg\|_{L^{p'}(\mathbb{H}, \sigma^{1-p'})}\lesssim [\sigma]_{B_p}^{\frac{1}{p}}\bigg(\sum_{I\in\mc{E}}\sigma(Q_I)\bigg)^{\frac{1}{p'}}
\]
for any $\mc{E}\subseteq\mc{D}$.
\end{lemma}
\begin{proof}
Set $\omega:=\sigma^{1-p'}$ and let $\gamma:=\min\{p,p'\}$ so that $1<\gamma\leq2$. Since $1-\frac{\gamma}{p}+1-\frac{\gamma}{p'}=2-\gamma\in[0,1)$, it follows from Lemma~\ref{lem:multilineartesting3} that
\begin{align*}
\frac{1}{\omega(Q_J)}\sum_{\substack{I\in\mc{E}\\ I \subseteq J}}\langle\sigma\rangle_{Q_I}\omega(Q_I)
&\leq[\sigma]^{\frac{\gamma}{p}}_{B_p}\frac{1}{\omega(Q_J)}\sum_{\substack{I\in\mc{E}\\ I\subseteq J}}\langle \sigma\rangle^{1-\frac{\gamma}{p}}_{Q_I}\langle \omega\rangle^{1-\frac{\gamma}{p'}}_{Q_I}A(Q_I)\\
&\lesssim [\sigma]^{\frac{\gamma}{p}}_{B_p}\frac{1}{\omega(Q_J)}\langle \sigma\rangle^{1-\frac{\gamma}{p}}_{Q_J}\langle \omega\rangle^{1-\frac{\gamma}{p'}}_{Q_J}A(Q_J)\\
&=[\sigma]^{\frac{\gamma}{p}}_{B_p}\langle \sigma\rangle^{1-\frac{\gamma}{p}}_{Q_J}\langle \omega\rangle^{-\frac{\gamma}{p'}}_{Q_J}.
\end{align*}

Thus, by \eqref{eq:covlemma},
\begin{align*}
\bigg\|\sum_{I \in\mc{E}}\langle \sigma\rangle_{Q_I}\ind_{Q_I}\bigg\|_{L^{p'}(\mathbb{H},\omega)}
&\lesssim[\sigma]^{\frac{\gamma}{p^2}}_{B_p}\bigg(\sum_{I \in\mc{E}}\langle \sigma\rangle^{(p'-1)(1-\frac{\gamma}{p})}_{Q_I}\langle \omega\rangle^{1-\frac{\gamma}{p}}_{Q_I}\sigma(Q_I)\bigg)^{\frac{1}{p'}}\\
&\leq [\sigma]^{\frac{\gamma}{p^2}+\frac{p'-1}{p'}(1-\frac{\gamma}{p})}_{B_p}\bigg(\sum_{I \in\mc{E}}\sigma(Q_I)\bigg)^{\frac{1}{p'}}.
\end{align*}
Since $\frac{\gamma}{p^2}+\frac{p'-1}{p'}(1-\frac{\gamma}{p})=\frac{1}{p}$, the assertion follows.
\end{proof}

\begin{proof}[Proof of Theorem \ref{WeightedWeakLp}]
We first prove the upper bound. By Proposition \ref{SparseBound}, it suffices to prove the bound with $P$ replaced by $S_{\mathcal{D}}$ for an arbitrary dyadic grid $\mathcal{D}$. Then, by Lemma~\ref{lem:lsutesting}, it suffices to show that
\[
\bigg\|\sum_{\substack{I \in\mc{D}\\ I \subseteq I_0}}\langle\sigma\rangle_{Q_I}\ind_{Q_I}\bigg\|_{L^{p'}(\mathbb{H}, \sigma^{1-p'})}\lesssim [\sigma]_{B_p}^{\frac{1}{p}}[\sigma]_{B_{\infty}}^{\frac{1}{p'}}\sigma(Q_{I_0})^{\frac{1}{p'}}
\]
uniformly for all $I_0\in\mc{D}$. Indeed, by Lemma~\ref{lem:thme} and Lemma~\ref{cup}, we have
\[
\bigg\|\sum_{\substack{I \in\mc{D}\\ I \subseteq I_0}}\langle \sigma\rangle_{Q_I}\ind_{Q_I}\bigg\|_{L^{p'}(\mathbb{H},\sigma^{1-p'})}\lesssim[\sigma]_{B_p}^{\frac{1}{p}}\bigg(\sum_{\substack{I\in\mc{D}\\ I\subseteq I_0}}\sigma(Q_I)\bigg)^{\frac{1}{p'}}\lesssim[\sigma]_{B_p}^{\frac{1}{p}}[\sigma]_{B_\infty}^{\frac{1}{p'}}\sigma(Q_{I_0})^{\frac{1}{p'}}.
\]
The result follows.

\bigskip

We now show that the estimate is sharp in terms of $[\sigma]_{B_p}$. Our argument is inspired by the examples from \cite{PR2013}*{Section 5} and \cite{R2025}*{Chapter 1}. Suppose $\phi\colon [1,\infty)\to [1,\infty)$ is an increasing function such that 
\[
\|P\|_{L^p(\mathbb{H},\sigma)\to L^{p,\infty}(\mathbb{H},\sigma)}\leq\phi([\sigma]_{B_p})
\]
for all $\sigma\in B_p$. Let $p \in (1,\infty)$, $\delta \in (0,1]$, $\sigma(z) := |z|^{2(\delta-1)}$, 
\[
S:= \bigg\{z \in \mathbb{C}\colon |z|>2 \,\,\text{and}\,\, \frac{3\pi}{8} < \text{Arg}(z)< \frac{5\pi}{8}\bigg\},
\]
and $f(z) := |z|^{2\delta(1-p')}\chi_{S}(z)$. Then $\sigma \in B_p$ and $f \in L^{p}(\mathbb{H},\sigma)$ with 
$$
    [\sigma]_{B_p}\approx \delta^{-1} \quad\text{and}\quad \|f\|_{L^p(\mathbb{H},\sigma)} \approx \delta^{-\frac{1}{p}}.
$$

Note that if $z \in \mathbb{H} \cap D(0,1)$ and $w \in S$, then, since $|w|>2\geq2|z|$, we have
\[
    |z-\overline{w}|\leq|z|+|w|\leq\tfrac{3}{2}|w|.
\]
Moreover, a geometric argument shows that for $z\in\mathbb{H}\cap D(0,1)$ and $w\in S$,
$$
\text{Arg}(\overline{z}-w)\geq \text{Arg}(-1-2\text{e}^{\text{i}\frac{3\pi}{8}})
=\pi+\arctan\frac{2\sin\frac{3\pi}{8}}{1+2\cos\frac{3\pi}{8}}
=\pi+\arctan\frac{\sqrt{2+\sqrt{2}}}{1+\sqrt{2-\sqrt{2}}},
$$
and
\begin{align*}
\text{Arg}(\overline{z}-w)
&\leq \text{Arg}(1-2\text{e}^{\text{i}\frac{5\pi}{8}})
=\text{Arg}(1+2\text{e}^{-\text{i}\frac{3\pi}{8}})\\
&=2\pi-\arctan\frac{2\sin\frac{3\pi}{8}}{1+2\cos\frac{3\pi}{8}}
=2\pi-\arctan\frac{\sqrt{2+\sqrt{2}}}{1+\sqrt{2-\sqrt{2}}}.
\end{align*}
Since $\arctan\frac{\sqrt{2+\sqrt{2}}}{1+\sqrt{2-\sqrt{2}}}\approx0.8082>\frac{49\pi}{192}$, we have
$$
\pi+\frac{49\pi}{192}\leq\text{Arg}(\overline{z}-w)\leq2\pi-\frac{49\pi}{192},
$$
which implies
$$
\frac{\pi}{2}<\frac{49\pi}{96}\leq\text{Arg}\big((\overline{z}-w)^2\big)
\leq\frac{143\pi}{96}<\frac{3\pi}{2}.
$$
Consequently,
$$
\text{Re}\big((\overline{z}-w)^2\big)<0,
$$
and
$|\text{Im}\big((\overline{z}-w)^2\big)|\leq \left|\tan\frac{49\pi}{96}\right||\text{Re}\big((\overline{z}-w)^2\big)|<31|\text{Re}\big((\overline{z}-w)^2\big)|$ so that
$$
|\overline{z}-w|^2\leq 32|\text{Re}\big((\overline{z}-w)^2\big)|=-32\text{Re}\big((\overline{z}-w)^2\big).
$$
Hence, combining the above estimates, we see that
$$
-\text{Re}\bigg(\frac{1}{(z-\overline{w})^2}\bigg) = \frac{-\text{Re}\big((\overline{z}-w)^2\big)}{|z-\overline{w}|^4} \geq\frac{1}{32} \frac{|\overline{z}-w|^2}{|z-\overline{w}|^4} = \frac{1}{32}\frac{1}{|z-\overline{w}|^2}\geq\frac{1}{72}\frac{1}{|w|^2}.
$$
Therefore, for $z \in \mathbb{H}\cap D(0,1)$, we have 
\begin{align*}
    |Pf(z)| &= \bigg|\int_S \frac{|w|^{2\delta(1-p')}}{(z-\overline{w})^2} \, dA(w)\bigg|\\
    &\geq \bigg|\text{Re}\bigg(\int_S \frac{|w|^{2\delta(1-p')}}{(z-\overline{w})^2}\,dA(w)\bigg)\bigg|\\
    &= -\int_S \text{Re}\bigg(\frac{1}{(z-\overline{w})^2}\bigg)|w|^{2\delta(1-p')}\,dA(w)\\
    &\gtrsim\int_S |w|^{2\delta(1-p')-2}\,dA(w)\\
    &\approx \int_2^{\infty} r^{2\delta(1-p')}\,\frac{dr}{r}
    \approx \delta^{-1}.
\end{align*}

Denoting the implicit constant in the above estimate by $C>0$, letting $t\geq 1$, and setting $\delta:=t^{-1}$ (so that $[\sigma]_{B_p}\leq ct$ for some $c>0$), we conclude that
\begin{align*}
\phi(ct)&\geq\phi([\sigma]_{B_p})\geq\frac{\|Pf\|_{L^{p,\infty}(\mathbb{H},\sigma)}}{\|f\|_{L^p(\mathbb{H},\sigma)}}\\
&\gtrsim\delta^{\frac{1}{p}} \sup_{0<\lambda < C\delta^{-1}} \lambda \sigma(\{z \in \mathbb{H}\cap D(0,1)\colon |Pf(z)|>\lambda\})^{\frac{1}{p}}\\
    &= C\delta^{-1} \delta^{\frac{1}{p}}\sigma(\mathbb{H}\cap D(0,1))^{\frac{1}{p}}
    \approx \delta^{-1} = t.
\end{align*}
The result follows.
\end{proof}

\begin{proof}[Proof of Corollary \ref{StrongPCorollary}]
Let $1\leq q< s < p < r < \infty$. Since $\sigma\in B_q\subseteq B_s\subseteq B_r$, we have by Theorem \ref{WeightedWeakLp} that 
$$
    \|P^+\|_{L^s(\mathbb{H},\sigma)\rightarrow L^{s,\infty}(\mathbb{H},\sigma)} \lesssim [\sigma]_{B_s}^{\frac{1}{s}}[\sigma]_{B_{\infty}}^{\frac{1}{s'}} \quad\text{and}\quad \|P^+\|_{L^r(\mathbb{H},\sigma)\rightarrow L^{r,\infty}(\mathbb{H},\sigma)} \lesssim [\sigma]_{B_r}^{\frac{1}{r}}[\sigma]_{B_{\infty}}^{\frac{1}{r'}}.
$$
By Marcinkiewicz interpolation \cite{Grafakos2024}*{Theorem 1.3.3} and using $[\sigma]_{B_r}\leq[\sigma]_{B_s}\leq [\sigma]_{B_q}$, this implies 
$$
    \|P^+\|_{L^p(\mathbb{H},\sigma)\rightarrow L^p(\mathbb{H},\sigma)} \lesssim \Big([\sigma]_{B_s}^{\frac{1}{s}}[\sigma]_{B_{\infty}}^{\frac{1}{s'}}\Big)^{\frac{\frac{1}{p}-\frac{1}{r}}{\frac{1}{s}-\frac{1}{r}}}\Big([\sigma]_{B_r}^{\frac{1}{r}}[\sigma]_{B_{\infty}}^{\frac{1}{r'}}\Big)^{\frac{\frac{1}{s}-\frac{1}{p}}{\frac{1}{s}-\frac{1}{r}}} \leq [\sigma]_{B_q}^{\frac{1}{p}}[\sigma]_{B_{\infty}}^{\frac{1}{p'}},
$$
as desired.
\end{proof}

\subsection{Proof of Theorem \ref{MixedWeak}}

Before proceeding to the proof of Theorem \ref{MixedWeak}, we provide some more lemmata. Recall that for a function $f$, a weight $\sigma$, and an interval $I\subseteq\mathbb{R}$, we write\looseness=-1
$$
\langle f\rangle_{\sigma,Q_I}:=\frac{1}{\sigma(Q_I)}\int_{Q_I}f\sigma dA.
$$

\begin{lemma}\label{weight-change}
If $u,v$ are weights on $\mathbb{H}$ such that $v\in B_{p}(u)$ for some $p\in[1,\infty)$, then 
$$
\langle|f|\rangle_{u,Q_I}
\leq[v]_{B_{p}(u)}^{\frac{1}{p}}\langle|f|^{p}\rangle_{uv,Q_I}^{\frac{1}{p}}
$$
for any interval $I\subseteq\mathbb{R}$ and any $f\in L^{p}(\mathbb{H},uv)$
\end{lemma}
\begin{proof}
In the case $p=1$, we have
\begin{align*}
\langle|f|\rangle_{u,Q_I}
=\langle v\rangle_{u,Q_I}\frac{1}{uv(Q_I)}\int_{Q_I}|f|u\,dA
\leq [v]_{B_{1}(u)}\langle |f|\rangle_{uv,Q_I}.
\end{align*}
In the case $p>1$, we can apply H\"{o}lder's inequality to deduce that
\begin{align*}
\langle|f|\rangle_{u,Q_I}
&\leq\langle|f|^{p}v\rangle_{u,Q_I}^{\frac{1}{p}}
    \langle v^{-p'/p}\rangle_{u,Q_I}^{\frac{1}{p'}}\\
&=\langle|f|^{p}\rangle_{uv,Q_I}^{\frac{1}{p}}
    \langle v\rangle_{u,Q_I}^{\frac{1}{p}}
    \langle v^{-p'/p}\rangle_{u,Q_I}^{\frac{1}{p'}}\\
&\leq[v]_{B_{p}(u)}^{\frac{1}{p}}\langle|f|^{p}\rangle_{uv,Q_I}^{\frac{1}{p}}.
\end{align*}
The proof is complete.
\end{proof}

\begin{lemma}\label{esi}
If $\sigma\in B_{\infty}$ is of bounded hyperbolic oscillation, then 
$$\frac{\sigma(E)}{\sigma(Q_I)}
\lesssim c_{\sigma}
  \left(\frac{A(E)}{A(Q_I)}\right)^{\frac{1}{1+9[\sigma]_{B_{\infty}}}}
$$
for any interval $I\subset\mathbb{R}$ and any measurable subset $E\subseteq Q_I$.
\end{lemma}
\begin{proof}
Write $r=1+\frac{1}{9[\sigma]_{B_{\infty}}}$. By H\"{o}lder's inequality and \eqref{ReverseHolder}, we have
\begin{align*}
\frac{1}{A(Q_I)}\int_E\sigma \,dA
&\leq\left(\frac{1}{A(Q_I)}\int_E\sigma^rdA\,\right)^{\frac{1}{r}}\left(\frac{A(E)}{A(Q_I)}\right)^{\frac{1}{r'}}\\
&\lesssim c_{\sigma}\left(\frac{1}{A(Q_I)}\int_{Q_I}\sigma \,dA\right)
    \left(\frac{A(E)}{A(Q_I)}\right)^{\frac{1}{1+9[\sigma]_{B_{\infty}}}},
\end{align*}
and the desired inequality follows.
\end{proof}

\begin{lemma}\label{E_I}
Let $\sigma\in B_{\infty}$ be of bounded hyperbolic oscillation and $f\in L^1(\mathbb{H},\sigma)$. If $\mathcal{E}\subseteq\DD$ is a finite collection of dyadic intervals satisfying 
\begin{equation}\label{fj}
2^{-j-1}\leq\langle|f|\rangle_{\sigma,Q_I}\leq 2^{-j}
\end{equation}
for some $j \in \mathbb{N}$ and all $I\in\mathcal{E}$, then for each $I\in\mathcal{E}$ there exists a subset $E_I\subseteq Q_I$ such that
$$
\int_{Q_I}|f|\sigma \,dA\leq 6\int_{E_I}|f|\sigma \,dA
$$
and
$$
\sum_{I\in\mathcal{E}}\chi_{E_I}\lesssim(1+\log c_{\sigma})[\sigma]_{B_{\infty}}.
$$
\end{lemma}
\begin{proof}
Let $\mathcal{E}^0$ be the collection of maximal intervals of $\mathcal{E}$ with respect to inclusion, and for each $i\in\mathbb{N}$, let $\mathcal{E}^i$ be the collection of maximal intervals of $\mathcal{E}\setminus\cup_{k=0}^{i-1}\mathcal{E}^{k}$. % with respect to inclusion. 
Fix $I\in\mathcal{E}$. Then there exists a nonnegative integer $i$ such that $I\in\mathcal{E}^i$. For every positive integer $m$, define $L_{I,m}:=\bigcup_{\substack{J\in\mathcal{E}^{i+m}\\J\subseteq I}}Q_J$. It is clear that
$$
A(L_{I,1})\leq A\bigg(\bigcup_{\substack{J\in\DD\\J\subsetneq I}}Q_J\bigg)=\frac{1}{2} A(Q_I).
$$
Generally, for every $m\geq1$, we note that
$$L_{I,m}=\bigcup_{\substack{J\in \mathcal{E}^{i+m}\\ J\subseteq I}}Q_J
=\bigcup_{\substack{J'\in\mathcal{E}^{i+m-1}\\ J'\subseteq I}}
  \bigcup_{\substack{J\in\mathcal{E}^{i+m}\\J\subseteq J'}}Q_J
=\bigcup_{\substack{J'\in\mathcal{E}^{i+m-1}\\ J'\subseteq I}}L_{J',1},$$
and argue by induction to obtain 
\begin{align*}
A(L_{I,m})&=\sum_{\substack{J'\in\mathcal{E}^{i+m-1}\\J'\subseteq I}}A(L_{J',1})\\
&\leq\frac{1}{2}\sum_{\substack{J'\in\mathcal{E}^{i+m-1}\\J'\subseteq I}}A(Q_{J'})\\
&=\frac{1}{2} A(L_{I,m-1})\\
&\leq\frac{1}{2^m}A(Q_I).
\end{align*}
Combining this with Lemma \ref{esi}, we establish that
\begin{align*}
\frac{\sigma(L_{I,m})}{\sigma(Q_I)}
\leq Cc_{\sigma}
  \left(\frac{A(L_{I,m})}{A(Q_I)}\right)^{\frac{1}{1+9[\sigma]_{B_{\infty}}}}
\leq Cc_{\sigma}
  2^{-\frac{m}{1+9[\sigma]_{B_{\infty}}}}
\end{align*}
for some absolute constant $C\geq1$. Choose
$$m_0:=\min\left\{m\in\mathbb{N}\colon
m\geq\frac{\log(4Cc_{\sigma})}{\log2}\left(1+9[\sigma]_{B_{\infty}}\right)\right\},$$
and then
\begin{equation}\label{1/4}
\frac{\sigma(L_{I,m_0})}{\sigma(Q_I)}
\leq Cc_{\sigma}
  2^{-\frac{m_0}{1+9[\sigma]_{B_{\infty}}}}\leq\frac{1}{4}.
\end{equation}

Let $E_I:=Q_I\setminus L_{I,m_0}=Q_I\setminus\bigcup_{\substack{J\in\mathcal{E}^{i+m_0}\\J\subseteq I}}Q_J$. We now show that $E_I$ is the subset we are seeking. On one hand,
\begin{align*}
\sum_{I\in\mathcal{E}}\chi_{E_I}\leq m_0
  \leq1+\frac{\log(4Cc_{\sigma})}{\log2}\left(1+9[\sigma]_{B_{\infty}}\right)
\lesssim(1+\log c_{\sigma})[\sigma]_{B_{\infty}}.
\end{align*}
On the other hand, for $I\in\mathcal{E}^i$, applying \eqref{fj} and \eqref{1/4}, we obtain 
\begin{align*}
\int_{Q_I}|f|\sigma \,dA
&=\frac{1}{4}\langle|f|\rangle_{\sigma,Q_I}\sigma(Q_I)+\frac{3}{4}\int_{Q_I}|f|\sigma \,dA\\
&\leq2^{-j-2}\sigma(Q_I)+\frac{3}{4}\sum_{\substack{J\in\mathcal{E}^{i+m_0}\\J\subseteq I}}\int_{Q_J}|f|\sigma \,dA
  +\frac{3}{4}\int_{E_I}|f|\sigma \,dA\\
&\leq2^{-j-2}\sigma(Q_I)+3\cdot2^{-j-2}\sum_{\substack{J\in\mathcal{E}^{i+m_0}\\J\subseteq I}}\sigma(Q_J)
  +\frac{3}{4}\int_{E_I}|f|\sigma \,dA\\
&=2^{-j-2}\sigma(Q_I)+3\cdot2^{-j-2}\sigma(L_{I,m_0})+\frac{3}{4}\int_{E_I}|f|\sigma \,dA\\
&\leq\frac{7}{8}\cdot2^{-j-1}\sigma(Q_I)+\frac{3}{4}\int_{E_I}|f|\sigma \,dA\\
&\leq\frac{7}{8}\int_{Q_I}|f|\sigma \,dA+\frac{3}{4}\int_{E_I}|f|\sigma \,dA,
\end{align*}
which implies that
$$
\int_{Q_I}|f|\sigma \,dA\leq6\int_{E_I}|f|\sigma \,dA.
$$
The proof is complete.
\end{proof}

The following lemma can be found in \cite{CRr2020}*{Lemma 3.1}.

\begin{lemma}\label{sum-esti}
If $\gamma_1,\gamma_2,\eta>0$ and $\delta\geq0$, then
$$\sum_{k,j\geq0}\min\left\{\gamma_12^{-k},\eta\gamma_22^{-j}2^{(\delta-1)k}\right\}
\leq C_{\delta}\gamma_1\log_2(\mathrm{e}+\gamma_2)+\frac{\eta}{2}.$$
\end{lemma}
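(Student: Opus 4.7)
The plan is to reduce the double sum to a single sum over $k$ by bounding the inner $j$-sum, and then to carefully control the resulting logarithm in order to extract the precise factor $1/2$. For each fixed $k\geq 0$, let $J(k)$ denote the smallest nonnegative integer $j$ with $\eta\gamma_2 2^{-j} 2^{(\delta-1)k}\leq \gamma_1 2^{-k}$, equivalently $J(k)=\max\bigl\{0,\lceil \log_2(\eta\gamma_2 2^{\delta k}/\gamma_1)\rceil\bigr\}$. Splitting the $j$-sum at this threshold, summing the geometric tail of $2^{-j}$ beyond $J(k)$, and using the bound $2^{-J(k)}\leq \gamma_1/(\eta\gamma_2 2^{\delta k})$ (when $J(k)>0$), one obtains
$$\sum_{j\geq 0}\min\{\gamma_1 2^{-k},\,\eta\gamma_2 2^{-j} 2^{(\delta-1)k}\}\leq (J(k)+2)\gamma_1 2^{-k}.$$

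Summing over $k\geq 0$ and using the subadditivity $\log_2^+(xy)\leq \log_2^+(x)+\log_2(y)$ (valid for $y\geq 1$) with $y=2^{\delta k}$ to separate the $k$-dependence, together with $\sum_k 2^{-k}=2$ and $\sum_k k 2^{-k}=2$, the plan gives
$$S\leq C_\delta\gamma_1+2\gamma_1\log_2^+(\eta\gamma_2/\gamma_1).$$
Next, the multiplicative splitting $\log_2^+(\eta\gamma_2/\gamma_1)\leq \log_2^+(\gamma_2)+\log_2^+(\eta/\gamma_1)\leq \log_2(e+\gamma_2)+\log_2^+(\eta/\gamma_1)$ isolates the desired $\gamma_1\log_2(e+\gamma_2)$ contribution, and reduces the task to bounding $2\gamma_1\log_2^+(\eta/\gamma_1)$ by $C\gamma_1+\eta/2$.

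The main obstacle is extracting precisely the constant $1/2$ in front of $\eta$: a naive bound such as $\log_2 t\leq t/(e\ln 2)$ would only yield approximately $1.06\,\eta$, which is too large. The remedy is an elementary two-case estimate. For $\eta/\gamma_1\leq 16$ the logarithm is at most $4$, producing a term $\leq 8\gamma_1$ that is absorbable into the $C_\delta\gamma_1\log_2(e+\gamma_2)$ part. For $\eta/\gamma_1>16$ one uses the sharper inequality $\log_2 t\leq t/4$, valid exactly for $t\geq 16$ (since $(\log_2 t)/t$ is decreasing past $t=e$ and equals $1/4$ at $t=16$); this gives $2\gamma_1\log_2(\eta/\gamma_1)\leq 2\gamma_1\cdot(\eta/\gamma_1)/4=\eta/2$. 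The coefficient $2$ of $\gamma_1\log_2^+(\eta\gamma_2/\gamma_1)$ arising from $\sum_k 2^{-k}=2$ pairs exactly with the slope $1/4$ to produce the clean constant $1/2$. Combining the pieces yields the claimed inequality with $C_\delta$ depending only on $\delta$.
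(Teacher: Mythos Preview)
Your argument is correct. The paper does not actually prove this lemma; it simply cites \cite[Lemma 3.1]{CR} (Caldarelli and Rivera-R\'{i}os), so there is no in-paper proof to compare against. Your self-contained proof fills this gap cleanly: the threshold-splitting in $j$ and the bound $\sum_{j\geq 0}\min\{\cdots\}\leq (J(k)+2)\gamma_1 2^{-k}$ are standard, the separation of the $\delta k$ contribution via $\log_2^+(xy)\leq\log_2^+(x)+\log_2 y$ for $y\geq1$ is valid, and your two-case treatment of $2\gamma_1\log_2^+(\eta/\gamma_1)$ using $\log_2 t\leq t/4$ for $t\geq 16$ correctly produces the exact coefficient $\eta/2$ (which is essential, since in the applications this term is reabsorbed into the left-hand side). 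The absorption of the $8\gamma_1$ piece into $C_\delta\gamma_1\log_2(e+\gamma_2)$ is legitimate because $\log_2(e+\gamma_2)>\log_2 e>1$ for all $\gamma_2>0$.
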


We also need the weak-type $(1,1)$ bound for the weighted dyadic maximal operator. Recall that, given a dyadic grid $\DD$ and a weight $\sigma$, the weighted maximal operator $M^{\DD}_{\sigma}$ is given by
$$M^{\DD}_{\sigma}f:=\sup_{I\in\DD}\langle |f|\rangle_{\sigma,Q_I}\chi_{Q_I}$$
and that 
\begin{equation}\label{M-weak}
\|M_{\sigma}^{\DD}\|_{L^1(\mathbb{H},\sigma)\to L^{1,\infty}(\mathbb{H},\sigma)}\leq1.
\end{equation}

We are now ready to prove Theorem \ref{MixedWeak}.

\begin{proof}[Proof of Theorem \ref{MixedWeak}]
By Proposition \ref{SparseBound}, it suffices to establish the desired estimate for $S_{\DD}$ for a dyadic grid $\DD$. Moreover, by monotone convergence, we need only consider finite subcollections of $\DD$ (so that later in the proof we can use Lemma~\ref{E_I}). By abuse of notation, we will again denote such a finite subcollection by $\DD$. Fix $\lambda>0$ and a nonnegative function $f\in L^1(\mathbb{H},uv)$ with $\|f\|_{L^1(\mathbb{H},uv)}=1$. The bound \eqref{M-weak} implies that
\begin{equation}\label{>1/2}
uv\left(\left\{z\in\mathbb{H}\colon M^{\DD}_{uv}f(z)>\frac{\lambda}{2}\right\}\right)
\leq\frac{2}{\lambda}.
\end{equation}

{\bf Case 1:} $u\in B_1$ and $v\in B_p(u)$ for $1<p<\infty$. Let
$$G_{\lambda}:=\left\{z\in\mathbb{H}\colon\frac{S_{\DD}(fv)(z)}{v(z)}>\lambda \,\,\text{and}\,\, M^{\DD}_{uv}f(z)\leq\frac{\lambda}{2}\right\}.$$
Since $v<\lambda^{-1}S_{\DD}(fv)=S_{\DD}(\lambda^{-1}fv)$ on $G_{\lambda}$, we have
\begin{align}\label{G-sum}
uv(G_{\lambda})&\leq\int_{\mathbb{H}}S_{\DD}(\lambda^{-1}fv)u\chi_{G_{\lambda}}\,dA\\
&=\sum_{I\in\DD}\langle\lambda^{-1}fv\rangle_{Q_I}\int_{Q_I}u\chi_{G_{\lambda}}\,dA\nonumber\\
&=\sum_{I\in\DD}\langle u\rangle_{Q_I}\langle\chi_{G_{\lambda}}\rangle_{u,Q_I}
  \int_{Q_I}\lambda^{-1}fv\,dA\nonumber\\
&\leq[u]_{B_{1}}\sum_{I\in\DD}\langle\lambda^{-1}f\rangle_{uv,Q_I}
  \langle\chi_{G_{\lambda}}\rangle_{u,Q_I}uv(Q_I).\nonumber
\end{align}
By the definition of the set $G_{\lambda}$, we know that the last summation in \eqref{G-sum} only concerns the intervals $I\in\DD$ with $\langle\lambda^{-1}f\rangle_{uv,Q_I}\leq1/2$, and hence we can decompose the summation as follows. For $k,j\geq0$, let $\mathcal{E}_{k,j}$ be the collection of $I\in\DD$ such that
$$2^{-j-1}<\langle\lambda^{-1}f\rangle_{uv,Q_I}\leq2^{-j}\quad \mathrm{and}\quad
2^{-k-1}<\langle\chi_{G_{\lambda}}\rangle_{u,Q_I}\leq2^{-k},$$
and let
$$
s_{k,j}:=\sum_{I\in\mathcal{E}_{k,j}}\langle\lambda^{-1}f\rangle_{uv,Q_I}
  \langle\chi_{G_{\lambda}}\rangle_{u,Q_I}uv(Q_I).$$
Then by \eqref{G-sum},
\begin{equation}\label{G-sum-kj}
uv(G_{\lambda})\leq[u]_{B_{1}}\sum_{k,j\geq0}s_{k,j}.
\end{equation}

We now claim that for any $k,j\geq0$,
\begin{align}\label{min}
s_{k,j}\leq\min\Big\{ &{\lambda^{-1}C2^{-k}(1+\log c_{uv})[uv]_{B_{\infty}},}\\
&\qquad{2^{p+1}2^{-j}2^{k(p-1)}[uv]_{B_{\infty}}[v]_{B_{p}(u)}uv(G_{\lambda}) } \Big\}.\nonumber
\end{align}
For the first estimate, we apply Lemma \ref{E_I} to the function $\lambda^{-1}f$ and the weight $uv$ to get subsets $E_I\subseteq Q_I$ for each $I\in \mathcal{E}_{k,j}$ such that
$$\int_{Q_I}\lambda^{-1}fuv\,dA\leq 6\int_{E_I}\lambda^{-1}fuv\,dA$$
and
$$\sum_{I\in\mathcal{E}_{k,j}}\chi_{E_I}\leq C(1+\log c_{uv})[uv]_{B_{\infty}}.$$
Consequently,
\begin{align*}
s_{k,j}&\leq2^{-k}\sum_{I\in\mathcal{E}_{k,j}}\int_{Q_I}\lambda^{-1}fuv\,dA\\
&\leq6\cdot2^{-k}\sum_{I\in\mathcal{E}_{k,j}}\int_{E_I}\lambda^{-1}fuv\,dA\\
&\leq C2^{-k}(1+\log c_{uv})[uv]_{B_{\infty}}\int_{\mathbb{H}}\lambda^{-1}fuv\,dA\\
&=\lambda^{-1}C2^{-k}(1+\log c_{uv})[uv]_{B_{\infty}}.
\end{align*}
For the second estimate in \eqref{min}, we use Lemma \ref{cup} to deduce that
\begin{align*}
s_{k,j}\leq2^{-j-k}\sum_{I\in\mathcal{E}_{k,j}}uv(Q_I)
\leq 2\cdot2^{-j-k}[uv]_{B_{\infty}}uv\left(\bigcup_{I\in\mathcal{E}_{k,j}}Q_I\right).
\end{align*}
It follows from Lemma \ref{weight-change} that for any $I\in\mathcal{E}_{k,j}$,
\begin{align*}
2^{-k-1}&<\langle\chi_{G_{\lambda}}\rangle_{u,Q_I}
  \leq[v]^{\frac{1}{p}}_{B_{p}(u)}\langle\chi_{G_{\lambda}}\rangle_{uv,Q_I}^{\frac{1}{p}}
\leq[v]^{\frac{1}{p}}_{B_{p}(u)}\left(M_{uv}^{\DD}\chi_{G_{\lambda}}(z)\right)^{\frac{1}{p}}
\end{align*}
for any $z \in Q_I$, which, combined with \eqref{M-weak}, implies that
\begin{align*}
s_{k,j}
&\leq 2\cdot2^{-j-k}[uv]_{B_{\infty}}uv\left(\bigcup_{I\in\mathcal{E}_{k,j}}Q_I\right)\\
&\leq 2\cdot2^{-j-k}[uv]_{B_{\infty}}
  uv\left(\left\{M^{\DD}_{uv}\chi_{G_{\lambda}}>[v]^{-1}_{B_{p}(u)}2^{-(k+1)p}\right\}\right)\\
&\leq 2^{p+1}2^{-j}2^{k(p-1)}[uv]_{B_{\infty}}[v]_{B_{p}(u)}uv(G_{\lambda}).
\end{align*}
Therefore, the inequality \eqref{min} holds. 

Combining \eqref{G-sum-kj} with \eqref{min} gives that
\begin{align*}
uv(G_{\lambda})\leq\sum_{k,j\geq0}
  \min\Big\{ &{\lambda^{-1}C2^{-k}(1+\log c_{uv})[uv]_{B_{\infty}}[u]_{B_{1}},}\\
& {2^{p+1}2^{-j}2^{k(p-1)}[uv]_{B_{\infty}}[v]_{B_{p}(u)}[u]_{B_{1}}uv(G_{\lambda})} \Big\}.
\end{align*}
We now apply Lemma \ref{sum-esti} with $\gamma_1=\lambda^{-1}C(1+\log c_{uv})[uv]_{B_{\infty}}[u]_{B_{1}}$, $\gamma_2=2^{p+1}[uv]_{B_{\infty}}[v]_{B_{p}(u)}[u]_{B_{1}}$, $\delta=p$ and $\eta=uv(G_{\lambda})$ to establish that
\begin{align*}
uv(G_{\lambda})
&\leq\lambda^{-1}C_{p}(\log c_{uv}+1)[uv]_{B_{\infty}}[u]_{B_{1}}
    \log_2\left(\mathrm{e}+2^{p+1}[uv]_{B_{\infty}}[v]_{B_{p}(u)}[u]_{B_{1}}\right)
    +\frac{1}{2}uv(G_{\lambda})\\
&\leq\lambda^{-1}C_{p}(\log c_{uv}+1)[uv]_{B_{\infty}}[u]_{B_{1}}
    \left(\log([uv]_{B_{\infty}}[v]_{B_{p}(u)}[u]_{B_{1}})+1\right)
    +\frac{1}{2}uv(G_{\lambda}).
\end{align*}
Consequently,
$$uv(G_{\lambda})\lesssim \lambda^{-1}(\log c_{uv}+1)[uv]_{B_{\infty}}[u]_{B_{1}}
  \left(\log([uv]_{B_{\infty}}[v]_{B_{p}(u)}[u]_{B_{1}})+1\right),$$
which, in conjunction with \eqref{>1/2}, implies that
\begin{align*}
uv\left(\left\{v^{-1}S_{\DD}(fv)>\lambda\right\}\right)
 \lesssim \frac{1}{\lambda}(\log c_{uv}+1)[uv]_{B_{\infty}}[u]_{B_{1}}
\left(\log([uv]_{B_{\infty}}[v]_{B_{p}(u)}[u]_{B_{1}})+1\right).
\end{align*}

{\bf Case 2:} $v\in B_1$ and $u\in B_1(v)$. Since $u\in B_{1}(v)$, Lemma \ref{weight-change} implies that
$$M^{\DD}_{v}f\leq[u]_{B_{1}(v)}M_{uv}^{\DD}f.$$
This, together with \eqref{M-weak}, gives that
$$
uv\left(\left\{M^{\DD}_{v}f>\frac{\lambda}{2}\right\}\right)
\leq uv\left(\left\{M^{\DD}_{uv}f>\frac{\lambda}{2[u]_{B_{1}(v)}}\right\}\right)
\leq\frac{2[u]_{B_{1}(v)}}{\lambda}.
$$
Let
$$
G_{\lambda}:=\left\{z\in\mathbb{H}\colon\frac{S_{\DD}(fv)(z)}{v(z)}>\lambda \,\,\text{and} \,\, M^{\DD}_{v}f(z)\leq\frac{\lambda}{2}\right\}.
$$
Then it is enough to show
\begin{equation}\label{enough}
uv(G_{\lambda})\lesssim
\frac{1}{\lambda}(\log c_v+1)[v]_{B_1}[v]_{B_{\infty}}[u]_{B_{1}(v)}
\big(\log([v]_{B_{1}}[uv]_{B_{\infty}})+1\big).
\end{equation}
Since $v<\lambda^{-1}S_{\DD}(fv)=S_{\DD}(\lambda^{-1}fv)$ on $G_{\lambda}$, similarly as before, we have
\begin{align*}
uv(G_{\lambda})
&\leq\sum_{I\in\DD}\langle\lambda^{-1}fv\rangle_{Q_I}\int_{Q_I}u\chi_{G_{\lambda}}\,dA\\
&=\sum_{I\in\DD}\langle v\rangle_{Q_I}
  \langle\lambda^{-1}f\rangle_{v,Q_I}\int_{Q_I}u\chi_{G_{\lambda}}\,dA\\
&\leq[v]_{B_{1}}\sum_{I\in\DD}\langle\lambda^{-1}f\rangle_{v,Q_I}
  \langle\chi_{G_{\lambda}}\rangle_{uv,Q_I}uv(Q_I).
\end{align*}
For $k,j\geq0$, let $\mathcal{E}_{k,j}$ be the collection of $I\in\DD$ such that
$$
2^{-j-1}<\langle\lambda^{-1}f\rangle_{v,Q_I}\leq2^{-j}
\quad\mathrm{and}\quad
2^{-k-1}<\langle\chi_{G_{\lambda}}\rangle_{uv,Q_I}\leq2^{-k},
$$
and let
$$s_{k,j}:=\sum_{I\in\mathcal{E}_{k,j}}\langle\lambda^{-1}f\rangle_{v,Q_I}
\langle\chi_{G_{\lambda}}\rangle_{uv,Q_I}uv(Q_I).$$

We claim that
\begin{align}\label{min2}
s_{k,j}\leq\min\Big\{\lambda^{-1}C2^{-k}(1+\log c_v)[v]_{B_{\infty}}[u]_{B_{1}(v)},
4\cdot2^{-j}[uv]_{B_{\infty}}uv(G_{\lambda})\Big\}.
\end{align}
For the first inequality, we apply Lemma \ref{E_I} with the function $\lambda^{-1}f$ and the weight $v$ to find a subset $E_I\subseteq Q_I$ for each $I\in\mathcal{E}_{k,j}$ such that
$$\int_{Q_I}\lambda^{-1}fv\,dA\leq 6\int_{E_I}\lambda^{-1}fv\,dA$$
and
$$\sum_{I\in\mathcal{E}_{k,j}}\chi_{E_I}\leq C(1+\log c_v)[v]_{B_{\infty}}.$$
Consequently,
\begin{align*}
s_{k,j}&\leq2^{-k}\sum_{I\in\mathcal{E}_{k,j}}\langle\lambda^{-1}f\rangle_{v,Q_I}uv(Q_I)\\
&\leq6\cdot2^{-k}\sum_{I\in\mathcal{E}_{k,j}}\langle u\rangle_{v,Q_I}\int_{E_I}\lambda^{-1}fv\,dA\\
&\leq6\cdot2^{-k}[u]_{B_{1}(v)}\sum_{I\in\mathcal{E}_{k,j}}\int_{E_I}\lambda^{-1}fuv\,dA\\
&\leq\lambda^{-1}C2^{-k}(1+\log c_v)[v]_{B_{\infty}}[u]_{B_{1}(v)}.
\end{align*}
For the second inequality in \eqref{min2}, we apply Lemma \ref{cup}, \eqref{M-weak} and the fact that $M^{\DD}_{uv}\chi_{G_{\lambda}}\geq\langle\chi_{G_{\lambda}}\rangle_{uv,Q_I}>2^{-k-1}$ on $Q_I$ for all $I\in\mathcal{E}_{k,j}$ to establish that
\begin{align*}
s_{k,j}&\leq2^{-k-j}\sum_{I\in\mathcal{E}_{k,j}}uv(Q_I)\\
&\leq 2\cdot2^{-k-j}[uv]_{B_{\infty}}uv\left(\bigcup_{I\in\mathcal{E}_{k,j}}Q_I\right)\\
&\leq 2\cdot2^{-k-j}[uv]_{B_{\infty}}
    uv\left(\left\{M^{\DD}_{uv}\chi_{G_{\lambda}}>2^{-k-1}\right\}\right)\\
&\leq 4\cdot2^{-j}[uv]_{B_{\infty}}uv(G_{\lambda}),
\end{align*}
which proves \eqref{min2}. Therefore,
\begin{align*}
uv(G_{\lambda})\leq\sum_{k,j\geq0}
  \min\Big\{ {\lambda^{-1}C2^{-k}(1+\log c_v)[v]_{B_{\infty}}[v]_{B_{1}}[u]_{B_{1}(v)}},
  {4\cdot2^{-j}[v]_{B_{1}}[uv]_{B_{\infty}}uv(G_{\lambda})}  \Big\}.
\end{align*}
Applying Lemma \ref{sum-esti} with $\gamma_1=\lambda^{-1}C(1+\log c_v)[v]_{B_{\infty}}[v]_{B_{1}}[u]_{B_{1}(v)}$, $\gamma_2=4[v]_{B_{1}}[uv]_{B_{\infty}}$, $\delta=1$, and $\eta=uv(G_{\lambda})$, we conclude \eqref{enough} and finish the proof.
\end{proof}

%%%%%%%%%%%%%%%%%%%%%%%%%%%%%%%%%%%%%%%%%%%%%%%%%%%%%%%%%%%%%%%%%%

\section{Extensions to simple domains}\label{SimpleDomainsSection}

In this section, we describe how our bounds extend to general simple domains. The key feature of a simple domain $\Omega$ is that the geometry of its boundary $\partial \Omega$ is sufficiently well-behaved so that one can define an appropriate quasi-metric that turns $\partial \Omega$ into a space of homogeneous type with respect to the Lebesgue surface measure $S$. One can then endow the boundary with a dyadic structure using the results of Hyt\"onen and Kairema from \cite{HK2012} and, in turn, construct a dyadic structure on $\Omega$. More specifically, in the case of a strongly pseudoconvex domain $\Omega$, one equips $\partial \Omega$ with the horizontal subspace metric (also called the Carnot-Carath\'{e}odory distance) as described in \cite{BB2000}. A similar process can be carried out in the setting of finite type domains in $\C^2$ using the sub-Riemannian geometry of the boundary; see \cite{HWW20202}. Although the boundary geometry can be more structurally complicated in the remaining two cases of simple domains, the quasi-metric can be defined using the scaling approach in \cites{M1989, M1991, MConvex}. Since these details have been carried out for general simple domains in other manuscripts, such as \cite{HWW20202}, for example, we will only present the main ideas in the case of strongly pseudoconvex domains and simply refer to the above references for the process on other simple domains.

\subsection{Extensions to strongly pseudoconvex domains}
Here and henceforth, $\rho$ will denote a fixed, smooth defining function for $\Omega=\{z \in \C^n: \rho(z)<0\}.$ Given a point $\zeta \in \partial \Omega$, denote the horizontal subspace of the (real) tangent space at $\zeta$ by
$$
    H_\zeta := \{Z \in \C^n\colon \langle \overline{\partial}\rho(\zeta), Z \rangle=0\},
$$
where $\overline{\partial}\rho(\zeta)$ is the vector with $j^{\text{th}}$ component equal to $\frac{\partial\rho}{\partial \overline{z_j}}(\zeta)$ and $\langle \cdot \, , \cdot \, \rangle$ is the standard inner product on $\C^n.$ We define the horizontal metric for $z,w \in \partial \Omega$ by
\begin{align*}d(z,w) := \inf\bigg \{ &\int_{0}^1 |\alpha'(t)| \, dt\colon \text{ $\alpha \colon  [0,1] \rightarrow \partial \Omega$ is piecewise smooth} \\
& \quad\text{ with $\alpha(0)=z$, $\alpha(1)=w$, and $\alpha'(t) \in H_{\alpha(t)}$ for all $t \in [0,1]$} \bigg \}. \end{align*}
Let $B(z, \delta):=\{w \in \partial \Omega\colon d(z,w)<\delta\}$ be a metric ball on the boundary. Let $S$ denote the Lebesgue surface measure on $\partial \Omega$ (recall $\Omega$ is smoothly bounded). Since $(\partial \Omega, d, S)$ is a geometrically doubling space of homogeneous type, we can equip it with a dyadic structure using \cite{HK2012}*{Theorem 2.2}. The following proposition makes this precise.

\begin{prop} [\cite{HWW20202}, Lemma 3.6] \label{prop: BoundaryKubes}
Let $\delta>0$ be sufficiently small and let $s>1.$ There exist points $\{p_j^k\}_{\substack{j \in \mathcal{P}_k, \\ k \in \N }} \subseteq \partial\Omega$, where $\mathcal{P}_k:=\{0,1,2, \cdots, N_k\}$, and an associated collection of subsets $\mathcal{Q}=\{Q_j^k\}$ of $\partial \Omega$ such that $p_j^k \in Q_j^k$ and the subsets $Q_j^k$ satisfy 
\begin{enumerate}
    \item for each $k$, $\{p_j^k\}$ is a maximal set of points on $\partial \Omega$ satisfying $d(p_j^k, p_i^k)> s^{-k} \delta$ for all $i$ and $j$, i.e., if $p \in \partial \Omega$ does not belong to the collection $\{p_j^k\}$, then there exists an index $j_0$ so that $d(p, p_{j_0}^k) \leq s^{-k} \delta$;

    \item for each fixed $k$, $\{Q_j^k\}_{j \in \N}$ forms a partition of $\partial \Omega$;

    \item for any $k<\ell$ and any $i,j $, either $Q_j^k \supseteq Q_i^\ell$ or $Q_j^k \cap Q_i^\ell= \emptyset$;

    \item there exist positive constants $c$ and $C$ such that 

    $$B(p_j^k, c s^{-k} \delta) \subseteq Q_j^k \subseteq B(p_j^k, C s^{-k} \delta)$$
    for all $j$ and $k$; and

    \item each $Q_j^k$ contains at most $N$ sets of the form $Q_i^{k+1}$, where $N$ is independent of $j$ and $k$. \looseness=-1

\end{enumerate}

\end{prop}

Moreover, these dyadic sets serve as good approximations for arbitrary boundary balls, so long as a sufficient number of dyadic systems are used. 

\begin{prop}[\cite{HWW20202}, Lemma 3.7] \label{DyadicBoundary} 
Let $\delta$ and $\{p_j^k\}$ be as in Proposition \ref{prop: BoundaryKubes}. There exist finitely many collections $\{ \mathcal{Q}_{\ell}\}_{\ell=1}^M$ such that 

\begin{enumerate}
    \item each collection $\mathcal{Q}_{\ell}$ is associated with collections of dyadic points $\{p_j^k\}$ and sets $\{Q_j^k\}$ with respect to which all the properties in Proposition \ref{prop: BoundaryKubes} are satisfied; and 

    \item if $z \in \partial \Omega$ and $r>0$ is sufficiently small, then there exist $Q_{j_1}^{k_1} \in \mathcal{Q}_{\ell_1}$ and $Q_{j_2}^{k_2} \in \mathcal{Q}_{\ell_2}$ such that
    $$ Q_{j_1}^{k_1} \subseteq B(z, r) \subseteq Q_{j_2}^{k_2} \quad \text{and} \quad S(Q_{j_1}^{k_1})  \approx S(B(z, r)) \approx S(Q_{j_2}^{k_2}).$$
\end{enumerate}

\end{prop}

Let $\varepsilon>0$ be sufficiently small and define $N_\varepsilon(\partial \Omega):= \{\zeta \in \Omega\colon \operatorname{dist}(\zeta, \partial \Omega)< \varepsilon\}$. For $z \in N_\varepsilon(\partial \Omega)$, we denote by $\pi(z)$ the normal projection of $z$ to the boundary $\partial \Omega$; note that the map $\pi$ is well-defined and smooth as long as $\varepsilon$ is chosen small enough. These boundary dyadic sets then induce dyadic ``tents'' inside $\Omega$ according to the following definition:

$$\widehat{K}_j^k:= \{z \in \Omega\colon \pi(z) \in Q_j^k \,\, \text{and} \,\, |\pi(z)-z|< s^{-2k} \delta^2\}.$$
The disjoint subsets of these tents, which we call ``kubes", are defined by  
$$K_j^k:= \widehat{K}_j^k \setminus \bigcup_{\ell: \,p_\ell^{k+1} \in Q_j^k} \widehat{K}_{\ell}^{k+1}.$$
Here, we need to choose $\delta$ small enough so that $\widehat{K}_j^k$ always lies in $N_\varepsilon(\partial \Omega)$.
For notation, let $\widehat{K}_{0}^{-1}:=\Omega$, $K_0^{-1}:= \Omega \setminus \bigcup_{\substack{k \in \N \\ j \in \mathcal{P}_k}}\widehat{K}_j^k$ and $\mathcal{P}_{-1}:=\{0\}$. The tents and kubes then satisfy the following properties.

\begin{lemma}[\cite{HWW20202}, Lemma 3.9 and Lemma 3.11] \label{TentsandKubes} 
If $\mathcal{T}=\{ \widehat{K}_j^k\}$ is a system of tents induced by $\mathcal{Q}$ as given in Proposition \ref{prop: BoundaryKubes}, then 
\begin{enumerate}
    \item for any $\widehat{K}_j^k, \widehat{K}_i^{k+1} \in \mathcal{T}$, either $\widehat{K}_j^k \supseteq \widehat{K}_i^{k+1}$ or $\widehat{K}_j^k \cap \widehat{K}_i^{k+1}= \emptyset$; 

\item the kubes $K_j^k$ are pairwise disjoint and $\bigcup_{j,k} K_j^k=\Omega$; and 

\item for any $j$ and $k$, we have 
$$
    V(\widehat{K}_j^k) \approx V(K_j^k) \approx s^{-2kn} \delta^{2n},
$$ 
where $V$ denotes the Lebesgue volume measure on $\mathbb{C}^n$.
\end{enumerate}  
\end{lemma}
\noindent We note that the tents $\widehat{K}_j^k$ and kubes $K_j^k$ exactly correspond to the Carleson tents $Q_I$ and their pairwise disjoint subsets $T_I$ in $\mathbb{H}$ for $I \subseteq \mathcal{D}$, respectively. 

The relevance of this construction of tents lies in the fact that this geometry is well-adapted to the Bergman kernel $K_{\Omega}$ of a bounded, strongly pseudoconvex domain with smooth boundary $\Omega\subseteq\mathbb{C}^n$. In particular, we have the following dyadic domination of $K_{\Omega}$.
\begin{lemma}[\cite{HWW20202}, Theorem 4.6]\label{SimpleKernelBound}
If $\Omega \subseteq \mathbb{C}^n$ is a bounded, strongly pseudoconvex domain with smooth boundary, then 
$$|K_{\Omega}(z,w)|  \lesssim   V(\Omega)^{-1} \chi_{\Omega \times \Omega}(z,w)+ \sum_{\ell=1}^M \sum_{j,k} V(\widehat{K}_j^k)^{-1} \chi_{\widehat{K}_j^k \times \widehat{K}_j^k}(z,w)$$
for all $z,w \in \Omega$.
\end{lemma}

Using the integral representation of the positive Bergman operator 
$$
    P^+f(z) = \int_{\Omega}|K_{\Omega}(z,w)||f(w)|\,dV(w),
$$
Lemma \ref{SimpleKernelBound} immediately leads to a dyadic domination of $P^+$ as in Proposition \ref{SparseBound}, and hence the proofs of all our bounds naturally adapt to these domains. The only nuance is that the entire domain $\Omega$ must be included as a Carleson tent in the Muckenhoupt basis, since in this case $\Omega$ has finite measure. In particular, the (dyadic) $B_p$ condition is defined by 
$$
    [\sigma]_{B_p}:= \sup_{\substack{k \in \N \cup \{-1\} \\ j \in \mathcal{P}_k}} \langle \sigma \rangle_{\widehat{K}_j^k}\langle \sigma^{1-p'}\rangle_{\widehat{K}_j^k}^{p-1}.
$$

\begin{rem}
We remark that this $B_p$ constant can be defined in a slightly different way, which detects the quantitative differences between averages taken over the whole domain $\Omega$ versus the tents which are strict subsets; see \cite{HWW20202}*{Theorem 1.2}. The adjusted characteristic does give slightly more precise estimates, but, for the sake of simplicity, we do not pursue this refinement here.
\end{rem}

%%%%%%%%%%%%%%%%%%%%%%%%%%%%%%%%%%%%%%%%%%%%%%%%%%%%%%%%%%%%%%%%%%

\section{Acknowledgments}

We thank \'Oscar Ram\'irez for conversations regarding the optimality of Theorem \ref{WeightedWeakLp}.

%%%%%%%%%%%%%%%%%%%%%%%%%%%%%%%%%%%%%%%%%%%%%%%%%%%%%%%%%%%%%%%%%%

\begin{bibdiv}
\begin{biblist}
\bib{APR2019}{article}{
title={Characterizations of a limiting class $B_{\infty}$ of B\'ekoll\'e-Bonami weights},
author={A. Aleman},
author={S. Pott},
author={M. C. Reguera},
journal={Rev. Mat. Iberoam.},
volume={35},
date={2019},
number={6},
pages={1677--1692},
review={\MR{4029779}}
}

\bib{BB2000}{article}{
   author={Z. M. Balogh},
   author={M. Bonk},
   title={Gromov hyperbolicity and the Kobayashi metric on strictly
   pseudoconvex domains},
   journal={Comment. Math. Helv.},
   volume={75},
   date={2000},
   number={3},
   pages={504--533},
   issn={0010-2571},
   review={\MR{1793800}}
}

\bib{B198182}{article}{
title={In\'egalit\'e \`a poids pour le projecteur de Bergman dans la boule unit\'e de $\mathbb{C}^n$},
author={D. B\'ekoll\'e},
journal={Studia Math.},
volume={71},
date={1981/82},
number={3},
pages={305--323},
review={\MR{0667319}}
}

\bib{BB1978}{article}{
title={In\'egalit\'es \`a poids pour le noyau de Bergman},
author={D. B\'ekoll\`e},
author={A. Bonami},
journal={C. R. Acad. Sci. Paris S\'er. A-B},
volume={286},
date={1978},
number={18},
pages={A775--A778},
review={\MR{0497663}}
}

\bib{BBPR}{article}{
   author={D. Bekoll\'e},
   author={A. Bonami},
   author={M. M. Peloso},
   author={F. Ricci},
   title={Boundedness of Bergman projections on tube domains over light
   cones},
   journal={Math. Z.},
   volume={237},
   date={2001},
   number={1},
   pages={31--59},
}

\bib{BBGNP}{article}{
   author={D. B\'ekoll\'e},
   author={A. Bonami},
   author={G. Garrig\'os},
   author={C. Nana},
   author={M. M. Peloso},
   author={F. Ricci},
   title={Lecture notes on Bergman projectors in tube domains over cones: an
   analytic and geometric viewpoint},
   journal={IMHOTEP J. Afr. Math. Pures Appl.},
   volume={5},
   date={2004},
   pages={Exp. I, front matter + ii + 75},
}

\bib{CRr2020}{article}{
title={A sparse approach to mixed weak type inequalities},
author={M. Caldarelli},
author={I. P. Rivera-R\'ios},
journal={Math. Z.},
volume={296},
number={1-2},
pages={787--812},
date={2020},
review={\MR{4140763}}
}

\bib{COV2004}{article}{
title={Nonlinear potentials and two weight trace inequalities for general dyadic and radial kernels},
author={C. Cascante},
author={J. M. Ortega},
author={I. E. Verbitsky},
journal={Indiana Univ. Math. J.},
volume={53},
date={2004},
number={3},
pages={845--882},
review={\MR{2086703}}
}

\bib{CuMP2005}{article}{
title={Weighted weak-type inequalities and a conjecture of Sawyer},
author={D. Cruz-Uribe},
author={J. M. Martell},
author={C. P\'erez},
journal={Int. Math. Res. Not.},
volume={2005},
date={},
number={30},
pages={1849--1871},
review={\MR{2172941}}
}

\bib{DsLR2016}{article}{
title={Borderline weak-type estimates for singular integrals and square functions},
author={C. Domingo-Salazar},
author={M. Lacey},
author={G. Rey},
journal={Bull. Lond. Math. Soc.},
volume={48},
date={2016},
number={1},
pages={63--73},
review={\MR{3455749}}
}

\bib{FN2019}{article}{
title={Weak and strong type $A_1$-$A_{\infty}$ estimates for sparsely dominated operators},
author={D. Frey},
author={Z. Nieraeth},
journal={J. Geom. Anal.},
volume={29},
date={2019},
number={1},
pages={247--282},
review={\MR{3897012}}
}

\bib{Grafakos1}{book}{
title={Classical Fourier analysis},
author={L. Grafakos},
publisher={Springer},
edition={Third edition},
volume={249},
address={New York},
series={Grad. Texts in Math.},
date={2014},
review={\MR{3243734}}
}

\bib{Grafakos2024}{book}{
title={Fundamentals of Fourier analysis},
author={L. Grafakos},
series={Grad. Texts in Math.},
volume={302},
date={2024},
publisher={Springer},
pages={xvi+407 pp.},
review={\MR{4807386}}
}

\bib{GW2024}{article}{
title={Weighted estimates for the Bergman projection on planar domains},
author={A. W. Green},
author={N. A. Wagner},
journal={Trans. Amer. Math. Soc},
volume={377},
date={2024},
number={11},
pages={8023--8048},
review={\MR{4806203}}
}

\bib{HWW20201}{article}{
title={A B\'ekoll\`e-Bonami Class of Weights for Certain Pseudoconvex Domains},
author={Z. Huo},
author={N. A. Wagner},
author={B. D. Wick},
date={2021},
journal={J. Geom. Anal.},
volume={31},
number={6},
pages={6042--6066},
review={\MR{4267636}}
}

\bib{HWW20202}{article}{
title={Bekoll\'{e}-Bonami estimates on some pseudoconvex domains},
author={Z. Huo},
author={N. A. Wagner},
author={B. D. Wick},
journal={Bull. Sci. Math.},
volume={170},
date={2021},
review={\MR{4263006}}
}

\bib{HLMORSUt2012}{article}{
title={Weak and strong type estimates for maximal truncations of Calder\'on-Zygmund operators on weighted spaces},
author={T. P. Hyt\"onen},
author={M. T. Lacey},
author={H. Martikainen},
author={T. Orponen},
author={M. C. Reguera},
author={E. T. Sawyer},
author={I. Uriarte-Tuero},
journal={J. Anal. Math.},
volume={118},
date={2012},
number={1},
pages={177--220},
review={\MR{2993026}}
}

\bib{HL2018}{article}{
title={Weak and strong $A_p$-$A_{\infty}$ estimates for square functions and related operators},
author={T. P. Hyt\"onen},
author={K. Li},
journal={Proc. Amer. Math. Soc.},
volume={146},
date={2018},
number={6},
pages={2497--2507},
review={\MR{3778152}}
}

\bib{HK2012}{article}{
   author={T. P. Hyt\"onen},
   author={A. Kairema},
   title={Systems of dyadic cubes in a doubling metric space},
   journal={Colloq. Math.},
   volume={126},
   date={2012},
   number={1},
   pages={1--33},
}

\bib{HP2013}{article}{
title={Sharp weighted bounds involving $A_{\infty}$},
author={T. Hyt\"onen},
author={C. P\'erez},
journal={Anal. PDE},
volume={6},
date={2013},
number={4},
pages={777--818},
review={\MR{3092729}}
}

\bib{LSU2009}{article}{
title={Two weight inequalities for discrete positive operators},
author={M. T. Lacey},
author={E. T. Sawyer},
author={I. Uriarte-Tuero},
journal={arXiv:0911.3437},
date={2009}
}

\bib{LOP2008}{article}{
title={Sharp $A_1$ bounds for Calder\'{o}n-Zygmund operators and the relationship with a problem of Muckenhoupt and Wheeden},
author={A. K. Lerner},
author={S. Ombrosi},
author={C. P\'erez},
journal={Int. Math. Res. Not. IMRN},
volume={2008},
date={},
number={6},
pages={Art. ID rnm161, 11 pp.},
review={\MR{2427454}}
}

\bib{LOP2009}{article}{
title={$A_1$ bounds for Calder\'on-Zygmund operators related to a problem of Muckenhoupt and Wheeden},
author={A. K. Lerner},
author={S. Ombrosi},
author={C. P\'erez},
journal={Math. Res. Lett.},
volume={16},
date={2009},
number={1},
pages={149--156},
review={\MR{2480568}}
}

\bib{LNO2020}{article}{
title={On the sharp upper bound related to the weak Muckenhoupt-Wheeden conjecture},
author={A. K. Lerner},
author={F. Nazarov},
author={S. Ombrosi},
journal={Anal. PDE},
volume={13},
date={2020},
number={6},
pages={1939--1954},
review={\MR{4150264}}
}

\bib{LO2012}{article}{
title={An extrapolation theorem with applications to weighted estimates for singular integrals},
author={A. K. Lerner},
author={S. Ombrosi},
journal={J. Funct. Anal.},
volume={262},
date={2012},
number={10},
pages={4475--4487},
review={\MR{2900473}}
}

\bib{LOP2019}{article}{
title={Proof of an extension of E. Sawyer's conjecture about weighted mixed weak-type estimates},
author={K. Li},
author={S. Ombrosi},
author={C. P\'erez},
journal={Math. Ann.},
volume={374},
date={2019},
number={1-2},
pages={907--929},
review={\MR{3961329}}
}

\bib{M1989}{article}{
   author={J. D. McNeal},
   title={Boundary behavior of the Bergman kernel function in ${\bf C}^2$},
   journal={Duke Math. J.},
   volume={58},
   date={1989},
   number={2},
   pages={499--512},
   issn={0012-7094},
   review={\MR{1016431}},
   doi={10.1215/S0012-7094-89-05822-5},
}

\bib{M1991}{article}{
   author={J. D. McNeal},
   title={Local geometry of decoupled pseudoconvex domains},
   conference={
      title={Complex analysis},
      address={Wuppertal},
      date={1991},
   },
   book={
      series={Aspects Math.},
      volume={E17},
      publisher={Friedr. Vieweg, Braunschweig},
   },
   isbn={3-528-06413-7},
   date={1991},
   pages={223--230},
}

\bib{MConvex}{article}{
   author={J. D. McNeal},
   title={Estimates on the Bergman kernels of convex domains},
   journal={Adv. Math.},
   volume={109},
   date={1994},
   number={1},
   pages={108--139},
   issn={0001-8708},
   review={\MR{1302759}},
   doi={10.1006/aima.1994.1082},
}

\bib{MP2025}{article}{
title={Characterizations for arbitrary B\'ekoll\'e-Bonami weights},
author={C. Mudarra},
author={K.-M. Perfekt},
journal={arXiv:2506.05993},
date={2025}
}

\bib{NS2024}{article}{
title={Endpoint weak-type bounds beyond Calder\'on-Zygmund theory},
author={Z. Nieraeth},
author={C. B. Stockdale},
journal={arXiv:2409.08921},
date={2024}
}

\bib{NSS2024}{article}{
title={Weighted weak-type bounds for multilinear singular integrals},
author={Z. Nieraeth},
author={C. B. Stockdale},
author={B. Sweeting},
journal={arXiv:2401.15725},
date={2024}
}

\bib{OPR2016}{article}{
title={Quantitative weighted mixed weak-type inequalities for classical operators},
author={S. Ombrosi},
author={C. P\'erez},
author={J. Recchi},
journal={Indiana Univ. Math. J.},
volume={65},
date={2016},
number={2},
pages={615--640},
review={\MR{3498179}}
}

\bib{PR2013}{article}{
title={Sharp B\'ekoll\'e estimates for the Bergman projection},
author={S. Pott},
author={M. C. Reguera},
journal={J. Funct. Anal.},
volume={265},
date={2013},
number={12},
pages={3233–3244},
review={\MR{3110501}}
}

\bib{RTW2017}{article}{
title={Weighted estimates for the Berezin transform and Bergman projection on the unit ball},
author={R. Rahm},
author={E. Tchoundja},
author={B. D. Wick},
journal={Math. Z.},
volume={286},
date={2017},
number={3-4},
pages={1465--1478},
review={\MR{3671584}}
}

\bib{R2025}{article}{
title={Optimality of weighted bounds for singular integrals between Lorentz spaces},
author={\'O. Ram\'irez},
journal={Master's Thesis, Clemson University},
date={2025}
}

\bib{S2020}{article}{
title={A different approach to endpoint weak-type estimates for Calder\'on-Zygmund operators},
author={C. B. Stockdale},
journal={J. Math. Anal. Appl.},
volume={487},
date={2020},
number={2},
pages={124016, 13 pp.},
review={\MR{4073473}}
}

\bib{SW2022}{article}{
title={Weighted endpoint bounds for the Bergman and Cauchy-Szeg\H o projections on domains with near minimal smoothness},
author={C. B. Stockdale},
author={N. A. Wagner},
journal={Indiana Univ. Math. J.},
date={2022},
volume={71},
number={5},
pages={2099--2125},
review={\MR{4509829}}
}

\bib{SW2023}{article}{
title={Weighted theory of Toeplitz operators on the Bergman space},
author={C. B. Stockdale},
author={N. A. Wagner},
journal={Math. Z.},
volume={305},
date={2023},
number={1},
pages={Paper No. 10, 29 pp.},
review={\MR{4632204}}
}

\bib{WW2021}{article}{
title={Weighted $L^p$ estimates for the Bergman and Szeg\H{o} projections on strongly pseudoconvex domains with near minimal smoothness},
author={N. A. Wagner},
author={B. D. Wick},
journal={Adv. Math.},
volume={384},
date={2021},
review={\MR{4246099}}
}
\end{biblist}
\end{bibdiv}
\end{document}